\definecolor{gray75}{gray}{0.75}
\newcommand{\sln}{\linespread{1}}
\newcommand*{\email}[1]{\href{mailto:#1}{\nolinkurl{#1}} } 
\titleformat{\chapter}[block]{\LARGE\bfseries\sln}{Chapter \thechapter}{11pt}{\newline\huge\bfseries}
\newtheorem{thm}{Theorem}[section]
\newtheorem{rem}{Remark}[section]
\newtheorem{defn}{Definition}[section]
\newenvironment{proof}{\paragraph{Proof:}}{\hfill$\square$}
\newtheorem{lem}{Lemma}[section]
\newtheorem{proposition}{Proposition}[section]
\begin{document}
	\title{The Funk–Finsler Structure in the Constant Curvature Spaces}
	\author{Ashok Kumar\footnote{E-mail: ashokkumar@hri.res.in; Harish-Chandra Research Institute, A CI of Homi Bhabha National Institute, Chhatnag Road, Jhunsi, Prayagraj-211019, India.}, Hemangi Madhusudan Shah\footnote{E-mail: hemangimshah@hri.res.in ; Harish-Chandra Research Institute, A CI of Homi Bhabha National Institute, Chhatnag Road, Jhunsi, Prayagraj-211019, India.} and Bankteshwar Tiwari\footnote{E-mail: btiwari@bhu.ac.in; Centre for Interdisciplinary Mathematical Sciences, Banaras Hindu University, Varanasi-221005, India}}
	\date{} \maketitle
	
	\begin{abstract}
		\noindent 
		In this paper, we {\it find} the infinitesimal structure of Funk-Finsler metric in spaces of constant curvature. We investigate the geometry of this Funk-Finsler metric by explicitly computing its $S$-curvature, Riemann curvature, Ricci curvature, and flag curvature. Moreover, we show that the $S$-curvature of the Funk-Finsler metric in hyperbolic space  is bounded above by $\frac{3}{2}$, in spherical space bounded below by $\frac{3}{2}$, and in Euclidean case it is identically equal to $\frac{3}{2}$. Further, we show that the flag curvature of the Funk-Finsler metric in hyperbolic space  is bounded above by $-\frac{1}{4}$, in spherical space bounded below by $-\frac{1}{4}$, and in Euclidean case it is identically equal to $-\frac{1}{4}$.
		
	\end{abstract}
	{\footnotesize Keywords: Finsler structure, Funk metric, Randers metric.}\\
	{\footnotesize Mathematical subject classification: 53B40, 53B60, 53C50.}
	\section{Introduction}\label{sec1}
	The Funk metrics on a convex subset in Euclidean space was introduced as an example to support Hilbert's fourth problem of 1900 ICM: {\it Find all non-Euclidean metrics having line segments as their geodesics}. The infinitesimal structure of the Funk metric on the unit ball in the Euclidean space is a well-known Finsler metric of constant flag curvature $-\frac{1}{4}$. The Funk metric on the unit disc is also an example of the Randers metric on the unit ball.\\
	%which can be realized as the deformation of the well-known Klein metric on the ball by a closed $1$-form. \\
	The Funk distance  $d_{F, \Omega}(x, y)$ between any two points $x$ and $y$ in a bounded convex set $\Omega$ in the Euclidean space $\mathbb{R}^n$ is given by
	\begin{equation*}
		d_{F, \Omega}(x, y):=
		\left\{
		\begin{array}{ll}
			\log\left( \frac{|x-\mathfrak{a}|}{|y-\mathfrak{a}|} \right), & \mbox{if }  x \neq y \\
			0, & \mbox{if } x = y. 
		\end{array}
		\right.
	\end{equation*} 
	where $|.|$ represents the Euclidean norm in $\mathbb{R}^n$, $\mathfrak{a}=\overrightarrow{xy} \cap \partial \Omega$ and $\overrightarrow{xy}$  denotes the geodesic ray starting from $x$ and passing through $y$  and  $\partial \Omega=\bar{\Omega} \setminus \Omega$ denotes the  boundary of $\Omega$ \cite[Chapter 2, $\S 2$]{HHG}.\\ \newpage
	\noindent Also, the corresponding  Finsler structure on the convex set $\Omega$  is given by
	\begin{equation*}
		F_\Omega(x, \xi) := \inf  \left\lbrace t > 0 ~\vert  \left( x+\frac{\xi}{t} \right) \in   \Omega \right\rbrace,
	\end{equation*}
	where $\xi \in T_x\Omega$ \cite[Chapter 3, $\S 3$]{HHG}.
	
	\noindent Papadopoulos and Yamada \cite[$\S 2$]{PAYS} define the variational Funk metric $d_{F,\Omega}$ in an open-bounded convex subset $\Omega \subset \mathbb{R}^n$ equipped with Euclidean metric as
	\begin{equation*}
		d_{F,\Omega}(x,y)=\sup \limits_{\pi \in \mathscr{P} } \log \frac{ d(x, \pi)}{ d(y, \pi)},
	\end{equation*}
	where $x,y  \in \Omega$; $\mathscr{P}$ is the set of all supporting hyperplanes of $\Omega$ and $d(.,\pi)$ is the Euclidean distance function defined in $\Omega$ from the supporting hyperplane $\pi$. It is well known that the sine formula for $\Delta ABC$  in the Euclidean geometry is given by $$\frac{\sin A}{a}=\frac{\sin B}{b}=\frac{\sin C}{c};$$ while the sine formula in the hyperbolic geometry and spherical geometry are given by  $$\frac{\sinh A}{\sinh a}=\frac{\sinh B}{\sinh b}=\frac{\sinh C}{\sinh c}\ \  \text{and} \ \
	\frac{\sin A}{\sin a}=\frac{\sin B}{\sin b}=\frac{\sin C}{\sin c}$$
	respectively; here $a,b,c$ in these formulae are the Euclidean/ hyperbolic/ spherical lengths of sides $BC,CA$ and $AB$ of geodesic triangle $ ABC$ in the Euclidean/ hyperbolic/ spherical geometry respectively.
	
	Therefore, while studying the similarity of triangles in hyperbolic and spherical geometry, only the ratio of the lengths of its sides does not appear; in fact, it is weighted by $\sinh$ and $\sin$ functions respectively.
	Considering these facts, Papadopoulos and Yamada define the Funk distance  in hyperbolic and spherical geometry as follows, which we now call the Papadopoulos-Yamada-Funk distance:\\
	The variational formula for the Papadopoulos-Yamada-Funk distance $d_{F_H,\Omega}$, on an open bounded convex set $\Omega $ of the Hyperbolic space $\mathbb{H}^n$, is given by  (see \cite[$\S 3$]{PAYS}):
	\begin{equation}\label{int1}
		d_{F_H,\Omega}(x,y)=\sup \limits_{\pi \in \mathscr{P}} \log 
		\frac{\sinh d_H(x, \pi)}{\sinh d_H(y, \pi)}.
	\end{equation}
	where $x, y $ in $\Omega$; $\mathscr{P}$ is the set of all supporting hypersurfaces of $\Omega$ in $\mathbb{H}^n$ and $d_H(.,\pi)$ is the hyperbolic distance function defined in $\Omega$ from the supporting hypersurface $\pi$.
	\\ 
	Also, the variational formula for the Papadopoulos-Yamada-Funk distance $d_{F_S,\Omega}$ on a convex set $\Omega $ of the spherical space $\mathbb{S}^n$,  such that the diameter of $\Omega$ is less or equal to $\frac{\pi}{2}$ is given by (see \cite[$\S 3$]{PAYS}):
	\begin{equation}\label{V}
		d_{F_S,\Omega}(x,y)=\sup \limits_{\pi \in \mathscr{P} } \log \frac{\sin d_s(x, \pi)}{\sin d_s(y, \pi)}.
	\end{equation}
	where $x, y $ in $\Omega$; $\mathscr{P}$ is the set of all supporting hypersurfaces of $\Omega$ in $\mathbb{S}^n$ and $d_s(.,\pi)$ is the spherical distance function defined in $\Omega$ from the supporting hypersurface $\pi$. \\
	More precisely, if $\Omega$ is a strictly convex set on the sphere with diameter less than or equal to $\frac{\pi}{2}$, then the equation \eqref{V} reduces to
	\begin{equation}
		d_{F_S,\Omega}(x,y)= \log \frac{\sin d_s(x, \mathfrak{a})}{\sin d_s(y, \mathfrak{a})}.
	\end{equation}
	where $\mathfrak{a}=\overrightarrow{xy} \cap \partial \Omega$ and $\overrightarrow{xy}$  denotes the geodesic ray starting from $x$ and passing through $y$.\\
	
	\noindent In the sequel, we denote by $|.|$ and $\langle . \rangle$,  the Euclidean norm and the Euclidean inner product, respectively and $\mathbb{D}_E( r)=\left\lbrace (x^1,x^2) \in \mathbb{R}^2 :  (x^1)^2+(x^2)^2 <  r^2 \right\rbrace$, the Euclidean disc centered at the origin with radius $r$ in $\mathbb{R}^2$. \\\\
	In \cite{AHB2}, the present authors study the infinitesimal Funk-Finsler structure in various models of hyperbolic geometry. In the present paper, first we find the infinitesimal Funk-Finsler structure in spherical geometry and then we obtain a  formula for infinitesimal Funk-Finsler structure in the spaces of constant curvature $\epsilon=\{0,1,-1\}$. More precisely, we have the following theorems:
	
			\begin{thm}\label{thm1.1}
				The Funk-Finsler metric $\mathcal{F}_{\epsilon}$ on the disc $\mathbb{D}_E(r)$, centered at origin and radius $0<r\leq r_{\epsilon}$, equipped with a Riemannian metric of constant curvature $\epsilon \in \{-1,0,1\}$ is a Randers metric given by
				$\displaystyle \mathcal{F}_{\epsilon}(x,\xi )=\alpha_F (x,\xi )+\beta_F(x,\xi )$,
				where $\displaystyle \alpha (x,\xi )=\frac{\sqrt{\left(r^2-|x|^2 \right) |\xi|^2+ \langle x, \xi \rangle ^2 }}{r^2-|x|^2}$ is a Riemannian metric in the disc and $\beta$ is a closed $1$-form given by $\displaystyle \beta(x,\xi )=\frac{(1+\epsilon r^2) \langle x,\xi \rangle}{(r^2-|x|^2)(1+\epsilon|x|^2)}$ for all $x \in \mathbb{D}_E(r),\ \xi \in T_x\mathbb{D}_E(r)$, where $r_{\epsilon}=1 $ for $\epsilon= \pm 1 $ and $r_{0}<\infty$.
			\end{thm}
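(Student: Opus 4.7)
The strategy is to compute $\mathcal{F}_\epsilon(x,\xi)=\lim_{t\to 0^+}d_{F_\epsilon,\Omega}(x,x+t\xi)/t$ directly from the Papadopoulos--Yamada--Funk distance, and then to rearrange the result algebraically into a Randers form. I would work in a model of the background constant curvature metric on $\mathbb{D}_E(r)$ in which the Riemannian geodesics are Euclidean line segments --- the Klein (projective) disc model for $\epsilon=-1$, the gnomonic (central) projection for $\epsilon=1$, and $\mathbb{D}_E(r)$ itself for $\epsilon=0$. In each such model, the intersection $\mathfrak{a}=\overrightarrow{xy}\cap\partial\mathbb{D}_E(r)$ takes the closed form $\mathfrak{a}=x+u_0\xi$, where $u_0(x,\xi)$ is the positive root of $|x+u\xi|^2=r^2$:
$$u_0(x,\xi)=\frac{-\langle x,\xi\rangle+\sqrt{\langle x,\xi\rangle^2+(r^2-|x|^2)|\xi|^2}}{|\xi|^2}.$$

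Since $\mathbb{D}_E(r)$ is strictly convex, the variational formula in each geometry reduces (as in equation (V) of the excerpt) to
$$d_{F_\epsilon,\Omega}(x,y)=\log\frac{f_\epsilon(d_M(x,\mathfrak{a}))}{f_\epsilon(d_M(y,\mathfrak{a}))},$$
where $f_{-1}=\sinh$, $f_{0}=\mathrm{id}$, $f_{1}=\sin$ and $d_M$ is the background Riemannian distance. Because the geodesic from a moving point on the ray to the fixed boundary point $\mathfrak{a}$ is merely a truncation of the same Euclidean segment, $d_M(x+t\xi,\mathfrak{a})=\int_t^{u_0}\alpha_M(x+u\xi,\xi)\,du$, so $\frac{d}{dt}\big|_{t=0^+}d_M(x+t\xi,\mathfrak{a})=-\alpha_M(x,\xi)$, where $\alpha_M$ denotes the background Riemannian norm. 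The chain rule applied to the log-ratio then yields the compact expression
$$\mathcal{F}_\epsilon(x,\xi)=\frac{f'_\epsilon(d_M(x,\mathfrak{a}))}{f_\epsilon(d_M(x,\mathfrak{a}))}\,\alpha_M(x,\xi),$$
specializing to $\coth(d_H(x,\mathfrak{a}))\,\alpha_H(x,\xi)$, to $1/u_0$, and to $\cot(d_S(x,\mathfrak{a}))\,\alpha_S(x,\xi)$ for $\epsilon=-1,0,1$, respectively.

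To obtain the explicit Randers decomposition, I would insert the model-specific identities --- $\cosh d_H(p,q)=(1-\langle p,q\rangle)/\sqrt{(1-|p|^2)(1-|q|^2)}$ and its spherical analogue with cosines --- evaluated at $q=\mathfrak{a}$ using $|\mathfrak{a}|^2=r^2$, together with the closed-form expression for $\alpha_M(x,\xi)$ in the chosen model. The resulting expressions are irrational in $\xi$; multiplying numerator and denominator by the conjugate $\langle x,\xi\rangle+\sqrt{\langle x,\xi\rangle^2+(r^2-|x|^2)|\xi|^2}$ --- which for $\epsilon=0$ already converts $1/u_0$ into $\alpha_F+\beta_F\big|_{\epsilon=0}$ --- separates $\mathcal{F}_\epsilon$ into the Riemannian part $\alpha_F$ plus the linear form $\beta_F$. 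The extra $\coth$ or $\cot$ factor present for $\epsilon=\pm 1$ is precisely what produces the weight $(1+\epsilon r^2)/(1+\epsilon|x|^2)$ appearing in $\beta_F$, while leaving $\alpha_F$ unchanged across the three cases. Closedness $d\beta_F=0$ follows immediately, since $\beta_F$ is proportional to $d(|x|^2)$ times a function of $|x|^2$ alone.

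The main obstacle is the algebraic bookkeeping in the curved cases: after substituting the hyperbolic/spherical formulas for $\coth d_H$ (or $\cot d_S$) and for $\alpha_M$, one must show that the resulting mixture of rational and radical terms collapses, under the constraint $|\mathfrak{a}|^2=r^2$, to exactly the stated $\alpha_F+\beta_F$. A useful preliminary step is to verify the identity at $x=0$, where $d_M(0,\mathfrak{a})$ is direction-independent and $\alpha_M(0,\xi)$ is an isotropic multiple of $|\xi|$, and then extend to arbitrary $x$ either via transitivity of the isometry group of the background constant curvature metric on $\mathbb{D}_E(r)$ or by a direct symbolic computation. The limiting case $r=r_\epsilon$ for $\epsilon=\pm 1$ is handled by continuity, consistent with $\beta_F$ vanishing at $r=1$ when $\epsilon=-1$ so that $\mathcal{F}_{-1}$ reduces to the Klein hyperbolic metric on the full unit disc.
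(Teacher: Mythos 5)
Your strategy coincides with the paper's in its essentials: pass to a projectively flat chart on $\mathbb{D}_E(r)$ (Klein disc for $\epsilon=-1$, gnomonic projection for $\epsilon=1$, the identity for $\epsilon=0$), so that $\mathfrak{a}=x+u_0\xi$ is a Euclidean intersection point, and recover $\mathcal{F}_\epsilon$ as the Busemann--Mayer limit of the Papadopoulos--Yamada--Funk distance along $\gamma(t)=x+t\xi$. The difference is one of organization. The paper first derives a closed form for the \emph{distance} itself: the Appendix shows $\frac{\sin d_s(x,\mathfrak{a})}{\sin d_s(y,\mathfrak{a})}=\frac{|x-\mathfrak{a}|}{|y-\mathfrak{a}|}\sqrt{\frac{1+|y|^2}{1+|x|^2}}$, hence $d(x,y)=\log\frac{|x-\mathfrak{a}|}{|y-\mathfrak{a}|}+\frac{1}{2}\log\frac{1+|y|^2}{1+|x|^2}$, i.e.\ the Euclidean Funk distance plus an exact potential difference; the Randers form and the closedness of $\beta$ then drop out of a single differentiation. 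You instead differentiate the log-ratio first, obtaining the compact intermediate formula $\mathcal{F}_\epsilon(x,\xi)=\frac{f_\epsilon'(d_M(x,\mathfrak{a}))}{f_\epsilon(d_M(x,\mathfrak{a}))}\,\alpha_M(x,\xi)$, and rationalize afterwards. This is correct (the term $\log f_\epsilon(d_M(x,\mathfrak{a}))$ is $t$-independent, and $\frac{d}{dt}\big|_{0^+}d_M(x+t\xi,\mathfrak{a})=-\alpha_M(x,\xi)$ since the segment to $\mathfrak{a}$ is a minimizing geodesic), and it buys uniformity: the paper actually only proves the spherical case this way and imports the hyperbolic case from its earlier work, whereas your $(f_\epsilon'/f_\epsilon)\alpha_M$ formula treats all three $\epsilon$ at once. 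The algebra does collapse as you claim; e.g.\ for $\epsilon=1$ and $|\xi|=1$ one finds $\cot\bigl(d_S(x,\mathfrak{a})\bigr)\alpha_S(x,\xi)=\frac{1}{|\mathfrak{a}-x|}+\frac{\langle x,\xi\rangle}{1+|x|^2}$, and $\frac{1}{|\mathfrak{a}-x|}=\frac{1}{u_0}$ is exactly the Euclidean Funk metric of $\mathbb{D}_E(r)$, which reproduces $\alpha_F+\beta_F$.

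One step you should drop: extending the verification from $x=0$ to general $x$ ``via transitivity of the isometry group of the background constant curvature metric'' does not work. The Funk--Finsler metric depends on the domain $\Omega=\mathbb{D}_E(r)$, and the isometries of $\mathbb{H}^2$ or $\mathbb{S}^2$ that move $x$ to the center do not preserve this sub-disc (in the Klein model they send it to a different ellipse), so they are not isometries of $\mathcal{F}_\epsilon$; only rotations about the center are available. You must therefore rely on the direct computation you offer as the alternative, which, as indicated above, does go through.
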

			
			\begin{thm}\label{thm1.3}
				The $S$-curvature of the Funk-Finsler metric $\mathcal{F}_{\epsilon}$ in the hyperbolic space is bounded above by $\frac{3}{2}$, in the spherical case it is bounded below by $\frac{3}{2} $, and in the Euclidean case it is identically equal to $\frac{3}{2} $.
			\end{thm}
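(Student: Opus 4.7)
The plan is to apply the standard formula for the $S$-curvature of a Randers metric to the explicit decomposition $\mathcal{F}_\epsilon=\alpha_F+\beta_F$ provided by Theorem \ref{thm1.1}. The crucial structural simplification is that $\beta_F$ is closed, so the antisymmetric tensor $s_{ij}=\tfrac12(b_{i|j}-b_{j|i})$ vanishes identically; consequently the Randers $S$-curvature formula collapses to an expression depending only on $\alpha_F$, $\beta_F$, and the symmetric covariant derivative $r_{ij}=\tfrac12(b_{i|j}+b_{j|i})$ taken with respect to the Levi-Civita connection of $\alpha_F$.

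The first step is to exploit the fact that $\alpha_F$ is the Klein (projective) model of hyperbolic space on $\mathbb{D}_E(r)$, which is independent of $\epsilon$ and has completely classical Christoffel symbols. Using these, one computes $b_{i|j}$ component-wise for the given $\beta_F$, and then contracts with $y^iy^j$ to form $r_{00}$. Because $\beta_F$ depends on $\epsilon$ only through the scalar factor $(1+\epsilon r^2)/(1+\epsilon|x|^2)$, the $\epsilon$-dependence can be tracked cleanly through this factor and its gradient.

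Next, substitute the explicit expressions for $\alpha_F$, $\beta_F$, and $r_{00}$ into the Randers $S$-curvature formula. After routine but lengthy algebraic simplification, I expect the quotient $S/\mathcal{F}_\epsilon$ to assume the shape
\begin{equation*}
   \frac{S(x,\xi)}{\mathcal{F}_\epsilon(x,\xi)} \;=\; \frac{n+1}{2} + \epsilon\, G(x,\xi,r),
\end{equation*}
where $n=2$ produces the universal background constant $\tfrac{n+1}{2}=\tfrac{3}{2}$ and $G$ is a manifestly non-negative function on the projective tangent bundle of $\mathbb{D}_E(r)$ (I would hope to recognise $G$ as a perfect square, or at least as a ratio of sums of squares, in $\langle x,\xi\rangle$ and $|\xi|^2$). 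Once this decomposition is in hand, the three cases are immediate: $\epsilon=0$ gives the identity $S/\mathcal{F}_0=\tfrac{3}{2}$; $\epsilon=-1$ yields the upper bound $S/\mathcal{F}_{-1}\le\tfrac{3}{2}$; and $\epsilon=+1$ yields the lower bound $S/\mathcal{F}_{+1}\ge\tfrac{3}{2}$.

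The principal obstacle I anticipate is organisational rather than conceptual: the computation of $b_{i|j}$ and the simplification of the resulting rational expression in $(x,\xi)$ are heavy, and the final comparison with $\tfrac{3}{2}$ is transparent only if one succeeds in grouping terms so that the Euclidean value appears as an $\epsilon$-independent background and the $\epsilon$-correction factors out with a manifestly signed coefficient. Identifying this structural pattern — rather than brute-force simplification — is where the work must be concentrated.
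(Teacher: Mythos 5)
Your proposal follows essentially the same route as the paper: apply the standard Randers $S$-curvature formula $S=(n+1)\bigl[\tfrac{e_{00}}{2F}-(s_0+\rho_0)\bigr]$ to $\mathcal{F}_\epsilon=\alpha+\beta$, use closedness of $\beta$ to kill $s_{ij}$, compute $b_{i|j}$ from the Christoffel symbols of the $\epsilon$-independent Klein metric $\alpha$, and then determine the sign of $S_\epsilon-\tfrac{3}{2}\mathcal{F}_\epsilon$, which indeed carries an overall factor of $\epsilon$ exactly as you predict. Two cautions: do not omit the distortion term $\rho_0$ (with $\rho=\log\sqrt{1-\|\beta\|_\alpha^2}$) coming from the Busemann--Hausdorff volume form, since it supplies the second summand of the $S$-curvature; and the $\epsilon$-correction is not a manifest sum of squares --- in the spherical case its positivity still requires comparing $\bigl[(1+|x|^2)\sqrt{(r^2-|x|^2)|\xi|^2+\langle x,\xi\rangle^2}+(1+r^2)\langle x,\xi\rangle\bigr]^2$ against a Cauchy--Schwarz defect term $|x|^2|\xi|^2-\langle x,\xi\rangle^2$, using $r^2|x|^2<1$.
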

			
			\begin{thm}\label{thm1.2}
				The flag curvature of the Funk-Finsler metric $\mathcal{F}_{\epsilon}$ in the hyperbolic space is bounded above by $-\frac{1}{4}$, in the spherical case bounded below by $-\frac{1}{4}$, and in Euclidean case it is identically equal to $-\frac{1}{4}$.
			\end{thm}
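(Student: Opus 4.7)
The strategy is to combine Theorem~\ref{thm1.1} with the projective flatness of $\mathcal{F}_\epsilon$ and a direct computation via Berwald's flag curvature formula. By Theorem~\ref{thm1.1}, $\mathcal{F}_\epsilon = \alpha + \beta$, where $\alpha$ is (a rescaling of) the Klein--Beltrami metric on $\mathbb{D}_E(r)$ and $\beta$ is closed. Klein--Beltrami geodesics are Euclidean line segments, so $\alpha$ is projectively flat; closedness of $\beta$ then propagates this property to the Randers metric $\mathcal{F}_\epsilon$. Its geodesic spray therefore takes the form $G^{i}=P(x,y)\,y^{i}$ with
\[
P(x,y)=\frac{y^{k}F_{x^{k}}(x,y)}{2F(x,y)},\qquad F:=\mathcal{F}_\epsilon,
\]
and Berwald's formula yields
\[
K(x,y)=\frac{P^{2}-y^{k}P_{x^{k}}}{F^{2}}.
\]

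The crucial intermediate step is to establish a \emph{generalized Funk equation} for $F$. Differentiating $F=\alpha+\beta$ in $x^{k}$, using the identity $\alpha^{2}(r^{2}-|x|^{2})^{2}=(r^{2}-|x|^{2})|y|^{2}+\langle x,y\rangle^{2}$ together with the closedness of $\beta$, I would aim to show
\[
y^{k}F_{x^{k}}=F^{2}+\epsilon\,\Phi(x,y),
\]
where $\Phi$ is an explicit rational function whose $\epsilon$-dependence is inherited from the factor $(1+\epsilon r^{2})/(1+\epsilon|x|^{2})$ in $\beta$. When $\epsilon=0$ this specialises to the classical Funk relation $y^{k}F_{x^{k}}=F^{2}$, so that $P=F/2$ and Berwald's formula yields $K\equiv -1/4$; this is the Euclidean part of the theorem.

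For $\epsilon=\pm 1$, writing $P=F/2+\epsilon\Phi/(2F)$ and substituting into Berwald's formula, the $F/2$-contributions cancel in the expected way, leaving $K+1/4$ as an explicit rational function of $x$, $y$ and $\epsilon$. The core claim to verify is that this remainder has the \emph{same sign as} $\epsilon$: non-positive when $\epsilon=-1$ and non-negative when $\epsilon=+1$. My plan is to carry out the simplification and seek a factorisation in which a manifestly non-negative quadratic form in $y$---most plausibly a positive multiple of $(r^{2}-|x|^{2})|y|^{2}+\langle x,y\rangle^{2}$, divided by a positive power of $1+\epsilon|x|^{2}$---appears, with the remaining scalar coefficient carrying the sign of $\epsilon$. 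Such a factorisation delivers $K\le -1/4$ in the hyperbolic case and $K\ge -1/4$ in the spherical case in one stroke.

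The main obstacle I anticipate is not the derivation of the generalized Funk equation, which is mechanical once Theorem~\ref{thm1.1} is in hand, but the sign analysis of $K+1/4$. After differentiation the answer is a rational expression whose definite sign is not visible: organising it so that a non-negative quadratic form in $y$ can be factored out, while juggling the positive powers of $r^{2}-|x|^{2}$ and $1+\epsilon|x|^{2}$ in the denominators, requires careful bookkeeping and repeated use of the identity for $\alpha^{2}$. A useful consistency check is that for $r=1$ and $\epsilon=-1$, Theorem~\ref{thm1.1} gives $\beta\equiv 0$, so that $\mathcal{F}_{-1}=\alpha$ is the Klein metric of constant sectional curvature $-1$; any proposed formula for $K+1/4$ must specialise to the constant value $-3/4$ in this limiting case, providing a built-in validation of the sign analysis.
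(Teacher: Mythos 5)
Your route is genuinely different from the paper's. The paper never uses the projective factor to compute curvature: it invokes the Cheng--Shen formula \eqref{eqnn 4.3.70} for the Riemann curvature of a Randers metric with closed $\beta$, computes $\phi=b_{i|j}\xi^i\xi^j$ and $\psi=b_{i|j|k}\xi^i\xi^j\xi^k$ from the covariant derivatives of $b_i$ with respect to $\alpha$, uses that $\alpha$ has constant curvature $-1$ so that $\overline{\textbf{Ric}}=-\alpha^2$, and then reads off $\textbf{K}=\big(3(\phi/2\mathcal{F}_\epsilon)^2-\psi/(2\mathcal{F}_\epsilon)-\alpha^2\big)/\mathcal{F}_\epsilon^2$. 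Your plan --- projective flatness via Proposition \ref{ppn2.3}, $G^i=P\xi^i$ with $P=\xi^kF_{x^k}/(2F)$, and the Berwald formula $K=(P^2-\xi^kP_{x^k})/F^2$ --- is equally legitimate and in one respect cleaner: the ``generalized Funk equation'' $\xi^kF_{x^k}=F^2+\epsilon\Phi$ does hold, since the $\epsilon$-dependence enters only through the factor $(1+\epsilon r^2)/(1+\epsilon|x|^2)$ in $\beta$, which equals $1$ at $\epsilon=0$ where the classical Funk relation applies; and substituting $P=F/2+\epsilon\Phi/(2F)$ into the Berwald formula gives $K+\tfrac14=\epsilon\big(\epsilon\Phi^2/(4F^2)-\xi^k(\Phi/(2F))_{x^k}\big)/F^2$, so the overall factor of $\epsilon$ you need appears automatically and the Euclidean case is immediate.

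For $\epsilon=\pm1$, however, the entire content of the theorem is the sign of that remaining bracket, and your proposal only promises to establish it. Be aware that the factorization is not quite of the shape you predict: in the paper's final expression \eqref{K} the manifestly nonnegative factor is not a positive multiple of $Q:=(r^2-|x|^2)|\xi|^2+\langle x,\xi\rangle^2$ alone but the sum of two squares $(1+\epsilon r^2)(1+\epsilon|x|^2)\big(\sqrt{Q}+\langle x,\xi\rangle\big)^2+\big((1+\epsilon|x|^2)\sqrt{Q}+(1+\epsilon r^2)\langle x,\xi\rangle\big)^2$, multiplied by a prefactor $1+\epsilon\big(|x|^2|\xi|^2-\langle x,\xi\rangle^2\big)$ whose nonnegativity for $\epsilon=-1$ requires normalizing $|\xi|=1$ (legitimate, since $K$ is $0$-homogeneous in $\xi$) and then invoking $|x|^2-\langle x,\xi\rangle^2\le|x|^2<r^2\le1$. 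So the sign analysis is doable along your lines but needs this extra Cauchy--Schwarz step and a less obvious sum-of-squares decomposition; until that computation is actually carried out, your argument is a program rather than a proof. Your $r=1$, $\epsilon=-1$ consistency check (where $\beta\equiv0$ and $K+\tfrac14$ must reduce to $-\tfrac34$) is a good one and is indeed satisfied by the paper's closed form.
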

			
			\noindent In view of Theorem \ref{thm1.2}, the following remark is in order:
			\begin{rem}
				Since the Funk-Finsler metric $\mathcal{F}_{\epsilon}$ in the hyperbolic space is bounded above by a negative constant $-\frac{1}{4}$ and hence the disc $\mathbb{D}_E(r)$ equipped with this  Funk-Finsler metric $\mathcal{F}_{\epsilon}$ is a Cartan-Hadamard manifold. 
			
			\end{rem}
			\noindent The paper is organized as follows.
			In Sect.\ref{sec2}, we discuss the preliminaries required for the paper.  In Sect.\ref{sec3}, we explicitly construct the Funk-Finsler metric on a spherical disc and give the Proof of Theorem \ref{thm1.1}. In Sect.\ref{sec4}, we investigate the geometry of the Funk-Finsler metric on the disc $\mathbb{D}_E(r)$ and compute explicitly the spray coefficients, $S$-curvature,  Riemann curvature, Ricci curvature, the flag curvature, and give the proof of Theorem \ref{thm1.3} and Theorem \ref{thm1.2}. Further, in Sect.\ref{sec5}, we obtain the Zermelo Navigation data for the Funk-Finsler metric on the disc $\mathbb{D}_E(r)$. Finally, in Sect.\ref{sec6}, which is the Appendix, we find a formula for distance between two points on the sphere, which is useful in computing the Funk-Finsler structure on the sphere. 
			\section{Preliminaries}\label{sec2}
			The theory of Finsler manifolds can be considered as a generalization of Riemannian manifolds, where the Riemannian metric is replaced by a so-called {\it Finsler} metric. A Finsler metric is a smoothly varying family of Minkowski norms in each tangent space of the manifold.\\
			Let $ M $ be an $n$-dimensional smooth manifold and let $T_{x}M$ denote the tangent space of $M$ at $x$. The tangent bundle $TM$  of $M$ is the disjoint union of tangent spaces: $TM:= \sqcup _{x \in M}T_xM $. We denote the elements of $TM$ by $(x,\xi)$, where $\xi \in T_{x}M $ and $TM_0:=TM \setminus\left\lbrace 0\right\rbrace $.
			\begin{defn}[{Finsler structure \cite[\boldmath$\S 1.2$]{SSZ};  
					\cite[\boldmath$\S 16.2$]{Shiohama-BT}  }]\label{def 3.A01} 
				A {\it Finsler structure} on the manifold $M$ is a function $F:TM \to [0,\infty)$ satisfying the following conditions: 
				\begin{enumerate}[(i)]
					\item  $F$ is smooth on $TM_{0}$,
					\item  $F$ is a positively 1-homogeneous on the fibers of the tangent bundle $TM$,
					\item  The Hessian of $\displaystyle \frac{F^2}{2}$ with elements $\displaystyle g_{ij}=\frac{1}{2}\frac{\partial ^2F^2}{\partial \xi^i \partial \xi^j}$ is positive definite on $TM_0$.\\
					The pair $(M, F)$ is called a Finsler space, and $g_{ij}$ is called the fundamental tensor of the Finsler structure $F$.
				\end{enumerate}
			\end{defn}
			
			\noindent
			It is easy to see that Riemannian metrics are examples of Finsler metrics. The Randers metric is the simplest non-Riemannian example of a Finsler metric.
			\begin{defn}[{Randers Metric \cite[\boldmath$\S 1.2$]{SSZ}}]  Let $\displaystyle \alpha=\sqrt{a_{ij}(x)dx^idx^j}$ be a Riemannian metric  and $\beta=b_i(x)dx^i$ be a $1$-form on a smooth manifold  $M$ with $||\beta||_{\alpha}<1$, where $\displaystyle ||\beta||_{\alpha} = \sqrt {a^{ij}(x)b_{i}(x)b_{j}(x)}$; \  then $F(x,\xi)=\alpha(x,\xi)+\beta(x,\xi)$, for all $x \in M, \xi \in T_xM$, is called a Randers metric on $M$. 
			\end{defn}

							\begin{defn} [\textbf{The Riemann curvature tensor \cite[$\S 4.1$]{CXSZ}}]\label{def2}  
								The Riemann curvature tensor $R={{R}_\xi:T_xM \rightarrow T_xM}$, for a Finsler space  $(M^n,F)$ is defined by
								${R}_{\xi}(u)={R}^{i}_{k}(x,\xi)u^{k} \frac{\partial}{\partial x^i}$,\ $u=u^k\frac{\partial}{\partial x^k}$,\ where ${R}^{i}_{k}={R}^{i}_{k}(x,\xi)$ denote the coefficients of the Riemann curvature tensor of the Finsler metric $F$ and are given by,
								\begin{equation}\label{eqn2.2.10}
									{R}^{i}_{k}=2\frac{\partial{G}^i}{\partial	x^k}-\xi^j\frac{\partial^2{G}^i}{\partial x^j\partial	\xi^k}+2{G}^j\frac{\partial^2 {G}^i}{\partial \xi^j\partial	\xi^k}-\frac{\partial{G}^i}{\partial \xi^j}\frac{\partial {G}^j}{\partial \xi^k}.
								\end{equation}
								Here $G^i=G^i(x,\xi)$ are local functions on $TM$, called the spray coefficients defined by 
								\begin{equation}\label{eqn2.1.14}
									{G}^i=\frac{1}{4}{g}^{i{\ell}}\left\{\left[{F}^{2}\right]_{x^k\xi^{\ell}}\xi^k-	\left[{F}^{2}\right]_{x^{\ell}}\right\}.
								\end{equation} 
							\end{defn}
							\noindent	The flag curvature $K=K(x,\xi, P)$, generalizes the sectional curvature in Riemannian geometry to the Finsler geometry and does not depend on whether one is using the Berwald, the Chern or the Cartan connection.

							\begin{defn}[\textbf{Flag curvature \cite[$\S 4.1$]{CXSZ}}] 
								\textnormal{	For a  plane $P\subset T_xM$ containing a non-zero vector $\xi$ called \textit{pole}, the \textit{flag curvature} $\textbf{K}(x,\xi,P)$ is defined by
									\begin{equation}\label{eqn2.1.11}
										\textbf{K}(x,\xi,P) :=\frac{g_\xi(R_\xi(u), u)}{g_\xi(\xi, \xi)g_\xi(u, u)- g_\xi(\xi, u)^2},
									\end{equation}
									where $u\in P$ is such that $P=\text{span} \left\{ \xi,u\right\} $.\\
									If $\textbf{K}(x,\xi,P)=\textbf{K}(x,\xi)$, then the	Finsler metric $F$ is said to be of scalar flag curvature and if $\textbf{K}(x,\xi,P)=\text{constant}$, then the Finsler metric $F$ is said to be of constant flag curvature .}
							\end{defn}
							The relation between the Riemann curvature  $R^i_j$ and the scalar flag curvature $\textbf{K}(x,\xi)$ of a Finsler metric $F$ is given by (see for more detail \cite[$\S 4.1$]{CXSZ})
							\begin{equation}\label{eqn2.1.12}
								R^i_j= \textbf{K}(x,\xi)\left\lbrace F^2 \delta^i_j-FF_{\xi^j}\xi^i\right\rbrace.
							\end{equation}
							
							\noindent It is well known that there is no canonical volume form on a Finsler manifold, like in the Riemannian case. Indeed, there are several well-known volume forms on the Finsler manifold, for instance,  the {\it Busemann-Hausdorff} volume form, the {\it Holmes-Thompson} volume form,  the {\it maximum } volume form,  the {\it minimum } volume form, etc. Here we discuss only the Busemann-Hausdorff volume form.
							\begin{defn}[{Busemann-Hausdorff Volume form in Finsler manifolds \cite[\boldmath$\S 2.2$]{SZ}; \cite[\boldmath$\S 2.7$]{BT}}]
								Let $(M,F)$ be an $n$-dimensional Finsler manifold and $(U,x^i)$ be a coordinate chart containing the point $x$. Let $\{\frac{\partial}{\partial x^i}\big |_x\}_{i=1}^n$ be the basis of $T_xM$ induced from the coordinate chart $(U,x^i)$. Then the \textit{Busemann-Hausdorff} volume form on the Finsler manifold $(M,F)$ is defined as:
								$dV_{BH}=\sigma_{BH}(x) \ dx$, where
								\begin{equation}
									\sigma_{BH}(x)=\frac{\emph{Vol} ( B^n(1))}{\textnormal{Vol} \left\lbrace (\xi^i)\in \mathbb R^n: F(x,\xi^i\frac{\partial}{\partial x^i}\big |_x)<1\right\rbrace},
								\end{equation}
								and $dx=dx^1\wedge dx^2\wedge \dots \wedge dx^n$.
								Here $ B^n(1)$ denotes the Euclidean unit ball and \textnormal{Vol} denotes the canonical volume.
							\end{defn}
							\noindent
							The Busemann-Hausdorff volume form of the Randers metric can be explicitly given as follows:
							\begin{lem}[{\cite[\boldmath$\S 3$]{W}}]\label{lem 3.A1}
								The Busemann-Hausdorff volume form of the Randers metric $F =\alpha + \beta$ is given by,
								\begin{equation}\label{eqn 3.A38}
									dV_{BH} =\sigma_{BH}(x)dV_\alpha= \left( 1-||\beta||^2_\alpha\right)^{\frac{n+1}{2}} dV_\alpha,
								\end{equation}
								where $dV_\alpha=\sqrt{\det (a_{ij})} \ dx$.
							\end{lem}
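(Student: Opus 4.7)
The plan is to compute the volume of the $F$-unit ball inside $T_xM$ and then plug into the definition of $\sigma_{BH}$. Fix a point $x$, write $F(x,\xi)=\alpha(x,\xi)+\beta(x,\xi)$ with $\alpha=\sqrt{a_{ij}\xi^i\xi^j}$ and $\beta=b_i\xi^i$, and set $b:=\|\beta\|_\alpha<1$. The Busemann--Hausdorff density is the ratio of the Euclidean unit ball to the volume (measured with respect to $dx^1\wedge\cdots\wedge dx^n$) of the Minkowski ball $B_x:=\{\xi\in\mathbb{R}^n:\alpha(x,\xi)+\beta(x,\xi)<1\}$, so my job reduces to computing $\operatorname{Vol}(B_x)$.

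First I would trivialize $\alpha$ by a linear change of coordinates. Let $A$ be a linear isomorphism with $a_{ij}=(A^\top A)_{ij}$, and set $\eta=A\xi$. Then $\alpha(x,\xi)=|\eta|$ (Euclidean norm), $\beta(x,\xi)=\langle \tilde b,\eta\rangle$ for a Euclidean vector $\tilde b$ satisfying $|\tilde b|=b$, and the change-of-variable factor is $|\det A|=\sqrt{\det(a_{ij})}$. Thus
\begin{equation*}
\operatorname{Vol}(B_x)=\frac{1}{\sqrt{\det(a_{ij})}}\operatorname{Vol}\!\left\{\eta\in\mathbb{R}^n:\ |\eta|+\langle\tilde b,\eta\rangle<1\right\}.
\end{equation*}
An additional orthogonal rotation of $\eta$ aligns $\tilde b$ with the first coordinate axis, reducing the problem to computing the volume of $\Omega:=\{\eta:\ |\eta|+b\eta^1<1\}$.

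Now I would evaluate $\operatorname{Vol}(\Omega)$ by rewriting the defining inequality. On $\Omega$ we have $1-b\eta^1>0$, so squaring and rearranging yields
\begin{equation*}
(1-b^2)(\eta^1)^2+2b\eta^1+(\eta^2)^2+\cdots+(\eta^n)^2<1.
\end{equation*}
Completing the square in $\eta^1$ by setting $u^1=\sqrt{1-b^2}\,\eta^1+\frac{b}{\sqrt{1-b^2}}$ turns this into
\begin{equation*}
(u^1)^2+(\eta^2)^2+\cdots+(\eta^n)^2<\frac{1}{1-b^2},
\end{equation*}
a Euclidean ball of radius $(1-b^2)^{-1/2}$; the auxiliary constraint $\eta^1<1/b$ is automatically implied. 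Since $d\eta^1=du^1/\sqrt{1-b^2}$, this gives
\begin{equation*}
\operatorname{Vol}(\Omega)=\frac{1}{\sqrt{1-b^2}}\cdot\frac{\operatorname{Vol}(B^n(1))}{(1-b^2)^{n/2}}=\frac{\operatorname{Vol}(B^n(1))}{(1-b^2)^{(n+1)/2}}.
\end{equation*}

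Substituting back into the definition of the density,
\begin{equation*}
\sigma_{BH}(x)=\frac{\operatorname{Vol}(B^n(1))}{\operatorname{Vol}(B_x)}=\sqrt{\det(a_{ij})}\,(1-b^2)^{(n+1)/2},
\end{equation*}
which, multiplied by $dx^1\wedge\cdots\wedge dx^n$, gives exactly $dV_{BH}=(1-\|\beta\|_\alpha^{2})^{(n+1)/2}\,dV_\alpha$. The whole argument is essentially linear algebra plus one elementary integration; the only place where care is needed is the completion of the square and the verification that the halfspace condition $1-b\eta^1>0$ is subsumed by the ball condition, which I expect to be the main (but mild) obstacle.
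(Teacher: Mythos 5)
Your proof is correct. Note, however, that the paper itself offers no argument for this lemma: it is imported verbatim from Wu \cite{W} as a known formula, so there is no internal proof to compare against. What you have written is the standard self-contained derivation: trivialize $\alpha$ by a linear map $A$ with $a_{ij}=(A^{\top}A)_{ij}$ (picking up the Jacobian $\sqrt{\det(a_{ij})}$ and converting $\|\beta\|_{\alpha}$ into the Euclidean length $b$ of $\tilde b$), rotate, and compute the Lebesgue volume of $\{|\eta|+b\eta^{1}<1\}$ by completing the square, which exhibits the Randers unit ball as an affine image of a round ball of radius $(1-b^{2})^{-1/2}$ with an extra factor $(1-b^{2})^{-1/2}$ from the stretch in the $\eta^{1}$-direction, giving $\operatorname{Vol}(B^{n}(1))(1-b^{2})^{-(n+1)/2}$ and hence $\sigma_{BH}=\sqrt{\det(a_{ij})}\,(1-b^{2})^{(n+1)/2}$. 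All steps check out, including the point you flag: the squared inequality forces $\eta^{1}<\tfrac{1}{1+b}<\tfrac{1}{b}$, so the half-space condition $1-b\eta^{1}>0$ is indeed automatic and squaring is reversible. Your argument would serve perfectly well as a proof of the lemma in place of the bare citation.
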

							\noindent For the \textit{Busemann-Hausdorff} volume form 
							$dV_{BH}=\sigma_{BH}(x)dx$ on Finsler manifold $(M,F)$, the \textit{distortion} $\tau$ is defined by (see, \cite[$\S 5.1$]{SSZ})
							\begin{equation*}
								\tau (x,\xi) :=\ln \frac{\sqrt{\det(g_{ij}(x,\xi))}}{\sigma _{BH}(x)}.
							\end{equation*}
							Now we define $S$-curvature of the Finsler manifold $(M, F)$ with respect to the volume form $dV_{BH}$.
							
							\begin{defn}[{S-curvature, \cite[\boldmath$\S 5.1$]{SSZ}}]
								\textnormal{	For a vector $\xi\in T_xM\backslash \left\lbrace 0\right\rbrace$, let $\gamma=\gamma(t)$ be the geodesic with $\gamma(0)=x$ and $\dot{\gamma}(0)=\xi$.
									Then the $S$-curvature of the Finsler metric $F$ is defined by 	
									\begin{equation*}
										S(x, \xi)=\frac{d}{dt}\left[ \tau \left(\gamma(t), \dot{\gamma}(t)\right) \right]|_{t=0}.
								\end{equation*}}
							\end{defn}
							The $S$-curvature of $F$ in  terms of spray coefficients is given by
							\begin{equation}\label{eqn2.1.15}
								S(x,\xi)=\frac{\partial G^m}{\partial \xi^m}-\xi^m\frac{\partial\left( \ln\sigma_{BH}\right) }{\partial x^m},
							\end{equation} 
							where $G^m$ are given by  \eqref{eqn2.1.14}.
							\begin{defn}[{Berwald space, \cite[\boldmath$\S 2.1$]{SSZ}}]
								A Finsler metric $F$ on a manifold $M$ is called a Berwald
								metric if in any standard local coordinate system $(x^i,\xi^i)$  in $TM_0:=TM \setminus\left\lbrace 0\right\rbrace$, the Christoffel symbols $\Gamma^i_{jk}=\Gamma^i_{jk}(x)$ are functions of $x \in M$ only, in which case, $G^i=\frac{1}{2}\Gamma^i_{jk}(x) \xi^j \xi^k$ are quadratic in $\xi=\xi^i \frac{\partial}{\partial x^i}|_x$.
							\end{defn}
							\begin{proposition}\cite[Lemma $3.1.2$]{SSZ}\label{ppn2.1}
								The following three conditions are equivalent for a Randers metric $F=\alpha +\beta$:
								\begin{itemize}
									\item [(a)] $F$ is a Landsberg metric;
									\item [(b)]  $F$  is a Berwald metric;
									\item [(c)] $\beta$ is parallel with respect to $\alpha$.
								\end{itemize}
							\end{proposition}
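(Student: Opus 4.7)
The plan is to establish the cycle $(c) \Rightarrow (b) \Rightarrow (a) \Rightarrow (c)$. The first two implications rely on standard structural facts, while $(a) \Rightarrow (c)$ is the technical heart of the proposition. To dispatch $(c) \Rightarrow (b)$, I would invoke the well-known Randers decomposition of the spray coefficients, $G^i_F = G^i_\alpha + P(x,\xi)\,\xi^i + Q^i(x,\xi)$, where the correction terms $P$ and $Q^i$ are built from the symmetric and antisymmetric parts $r_{ij} = \tfrac{1}{2}(b_{i;j} + b_{j;i})$ and $s_{ij} = \tfrac{1}{2}(b_{i;j} - b_{j;i})$ of the $\alpha$-covariant derivative of $\beta$. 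If $\beta$ is $\alpha$-parallel, then $b_{i;j} = 0$, so $r_{ij} = s_{ij} = 0$; the decomposition collapses to $G^i_F = G^i_\alpha$, which is quadratic in $\xi$. By the definition of a Berwald metric, this gives (b).

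The implication $(b) \Rightarrow (a)$ is a general Finsler-geometric fact, not special to the Randers form. For a Berwald metric, the Berwald curvature $\partial^3 G^i / \partial \xi^j \partial \xi^k \partial \xi^\ell$ vanishes identically, and the Landsberg tensor is a symmetrized contraction of the Berwald tensor against the fundamental tensor $g_{ij}$; hence it too vanishes, proving (a).

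The main obstacle is $(a) \Rightarrow (c)$. Using the Randers spray decomposition above, one computes the Landsberg tensor $L_{ijk}$ explicitly as a rational function of $\xi$ whose denominators are powers of $\alpha$ and whose numerators are polynomials in $\xi$ with coefficients depending on $a_{ij}$, $b_i$, $r_{ij}$, and $s_{ij}$. Imposing $L_{ijk} = 0$ and clearing the $\alpha$-denominators yields a polynomial identity in $\xi$. Because $\alpha = \sqrt{a_{ij}\xi^i\xi^j}$ is irrational in $\xi$, this identity decouples into two independent tensorial equations: one collecting the terms rational in $\xi$ and one collecting the terms proportional to $\alpha$. A careful bookkeeping of these two systems forces $r_{ij} = 0$ and $s_{ij} = 0$ simultaneously, hence $b_{i;j} = 0$, which is exactly condition (c). The technical difficulty here is managing this rational-in-$\xi$ algebra cleanly, especially disentangling the contributions of $r_{ij}$ and $s_{ij}$; the cleanest route is to follow the detailed derivation in the monograph cited alongside the proposition rather than redoing it from scratch.
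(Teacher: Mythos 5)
The paper does not prove this proposition at all: it is quoted verbatim from Chern--Shen \cite[Lemma 3.1.2]{SSZ} and used as a black box, so there is no internal argument to compare against. Your outline is nonetheless the standard proof structure and is sound where it is carried out. The implication $(c)\Rightarrow(b)$ is complete: with $b_{i;j}=0$ one has $r_{ij}=s_{ij}=0$, so $P=0$ and $Q^i=0$ in the decomposition $G^i_F=G^i_\alpha+P\xi^i+Q^i$ (the same decomposition the paper itself records in its equations for the spray coefficients), and $G^i_F=G^i_\alpha$ is quadratic. The implication $(b)\Rightarrow(a)$ is also complete, with the minor precision that the Landsberg tensor is the contraction of the Berwald tensor with the distinguished covector $y_m=g_{mi}\xi^i$ (equivalently $FF_{\xi^m}$), not an arbitrary symmetrized contraction against $g_{ij}$; either way it vanishes when the Berwald tensor does. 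The only caveat is that $(a)\Rightarrow(c)$, which is the entire substance of the proposition, is described but not executed: you correctly identify the mechanism (compute $L_{ijk}$ from the Randers spray decomposition, clear powers of $\alpha$, and use the irrationality of $\alpha$ in $\xi$ to split the resulting identity into two tensor equations forcing $r_{ij}=0$ and $s_{ij}=0$), but you then defer the bookkeeping to the monograph. Since the paper itself does exactly that, this is an acceptable resolution here; as a self-contained proof it would still need that computation, or alternatively the shorter classical route via Matsumoto's observation that every Randers metric is C-reducible, for which Landsberg already implies Berwald.
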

							
							\begin{defn}[{Douglas space, \cite[\boldmath$\S 5.2$]{CXSZ}}]
								A Finsler metric is a Douglas metric if and only if $G^i\xi^j-G^j\xi^i$ are homogeneous polynomials in $(\xi^i)$ of degree three.
							\end{defn}
							
							\begin{proposition}\cite[Theorem $5.2.1$]{CXSZ}\label{ppn2.2}
								A Randers metric $F=\alpha +\beta$ is a Douglas metric if and
								only if $\beta$ is closed.
							\end{proposition}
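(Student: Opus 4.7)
The plan is to exploit the well-known spray decomposition for a Randers metric. Writing $b_{i|j}$ for the covariant derivative of $b_i$ with respect to the Levi-Civita connection of $\alpha$, and setting $r_{ij} = \tfrac12(b_{i|j}+b_{j|i})$, $s_{ij} = \tfrac12(b_{i|j}-b_{j|i})$, $s^i_j = a^{ik}s_{kj}$, $s_j = b_i s^i_j$, $e_{ij} = r_{ij}+b_i s_j+b_j s_i$, together with the convention that a subscript $0$ denotes contraction with $\xi$, the classical identity reads
\[
G^i = G^i_\alpha + \alpha\, s^i_0 + \left(\frac{e_{00}}{2F} - s_0\right)\xi^i.
\]
This formula is obtained from \eqref{eqn2.1.14} by computing the fundamental tensor of $F = \alpha + \beta$, inverting it in terms of $a^{ij}$ and $\beta$, and simplifying; it is available in standard references on Finsler geometry.

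Having this, I form the antisymmetric combination relevant to the Douglas condition. The non-polynomial piece $\bigl(\tfrac{e_{00}}{2F}-s_0\bigr)\xi^i$ produces the expression $\bigl(\tfrac{e_{00}}{2F}-s_0\bigr)\xi^i\xi^j$, which is symmetric in $i\leftrightarrow j$ and hence cancels in the antisymmetrization, leaving
\[
G^i\xi^j - G^j\xi^i = \bigl(G^i_\alpha\xi^j - G^j_\alpha\xi^i\bigr) + \alpha\bigl(s^i_0\xi^j - s^j_0\xi^i\bigr).
\]
The first bracket is a homogeneous polynomial of degree $3$ in $\xi$ since $G^i_\alpha$ are quadratic in $\xi$. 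Therefore $F$ is Douglas precisely when the second term is itself polynomial in $\xi$.

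The conclusion then follows from a polynomial-irrationality observation. Since $a_{ij}\xi^i\xi^j$ is an irreducible quadratic form in $\mathbb{R}[\xi^1,\ldots,\xi^n]$ (for $n\ge 2$), its square root $\alpha$ is not a rational function of $\xi$, and consequently $\alpha\cdot T(\xi)$ is a polynomial in $\xi$ (with $T$ polynomial) if and only if $T \equiv 0$. Applied to $T^{ij} := s^i_0\xi^j - s^j_0\xi^i$, the Douglas condition reduces to $T^{ij} \equiv 0$ for all $i,j$. A brief argument (for instance, specialising $\xi$ to coordinate basis vectors and invoking the antisymmetry of $s_{ij}$) shows this is equivalent to $s_{ij} = 0$, i.e.\ $b_{i|j} = b_{j|i}$, which is precisely the closedness of $\beta$. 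Conversely, if $\beta$ is closed then $s_{ij} = 0$ forces $s^i_0 = 0$ and the extra term vanishes outright, so $F$ is automatically Douglas. The main obstacle is having the spray decomposition at hand; once that is in place, the remainder of the argument is purely algebraic and rests solely on the irrationality of $\alpha$ in $\xi$.
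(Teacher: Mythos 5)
This proposition is quoted in the paper from Cheng--Shen \cite[Theorem 5.2.1]{CXSZ} without proof, so there is no in-paper argument to compare against; your proposal is essentially the standard textbook proof, and it is correct. The spray decomposition you invoke, $G^i=\bar G^i+\bigl(\tfrac{e_{00}}{2F}-s_0\bigr)\xi^i+\alpha s^i_0$, is exactly the formula the paper itself uses later (equations \eqref{eqnA5}--\eqref{eqnA6}), the symmetric projective term does cancel in $G^i\xi^j-G^j\xi^i$, and the irrationality argument is sound: a positive definite quadratic form in $n\ge 2$ variables is irreducible over $\mathbb{R}[\xi]$, so $\alpha\,T$ polynomial forces $T\equiv 0$. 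The only step you leave as a sketch is the final equivalence $s^i_0\xi^j-s^j_0\xi^i\equiv 0\iff s_{ij}=0$; the cleanest way to close it is to note that the identity says $s^i_0=\lambda(\xi)\xi^i$ for some scalar $\lambda$, and contracting with $a_{ik}\xi^k$ gives $\lambda\,a_{ik}\xi^i\xi^k=s_{kj}\xi^k\xi^j=0$ by antisymmetry, hence $\lambda=0$ and $s^i_j\xi^j=0$ for all $\xi$, i.e.\ $s_{ij}=0$. Since the Christoffel symbols are symmetric, $s_{ij}=0$ is the same as $\partial_jb_i=\partial_ib_j$, i.e.\ $d\beta=0$, completing the equivalence. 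With that one contraction supplied, the proof is complete.
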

							
							\begin{defn}[{Projectively flat space, \cite[\boldmath$\S 3.4$]{SSZ}}]\label{PF}
								A Finsler metric $F = F(x,\xi)$ on an open subset $\mathcal{U}\in \mathbb{R}^n$ is said to be \textit{projectively flat} if all geodesics are straight in $\mathcal{U}$, that is, $\sigma(t)=f(t) a+ b$  for some constant vectors $a,b \in \mathbb{R}^n$.  A Finsler metric $F$
								on a manifold $M$ is said to be locally projectively flat if at any point, there is a local coordinate system $(x^i)$ in which $F$ is projectively flat.
							\end{defn}
							
							\begin{proposition} \cite[Proposition $3.4.8$]{SSZ}\label{ppn2.3}
								A Randers metric $F=\alpha +\beta$ is locally projectively
								flat if and only if $\alpha$ is locally projectively flat and $\beta$ is closed.
							\end{proposition}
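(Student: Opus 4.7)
The plan is to reduce the question to the spray-coefficient characterization of projective flatness and then use Proposition \ref{ppn2.2} to isolate the closedness of $\beta$. Specifically, a Finsler metric on an open subset of $\mathbb{R}^n$ is locally projectively flat (Definition \ref{PF}) if and only if its spray coefficients take the form $G^i=P(x,\xi)\,\xi^i$ for some $P$ positively $1$-homogeneous in $\xi$; this follows from the geodesic equation $\ddot\sigma^i+2G^i(\sigma,\dot\sigma)=0$, since geodesics are straight up to reparametrization iff the acceleration is always a scalar multiple of the velocity.

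The key computational input is the classical decomposition of the spray coefficients of a Randers metric,
\begin{equation*}
G^i={}^{\alpha}G^i+P\,\xi^i+\alpha\, s^i_{\ 0},
\end{equation*}
where ${}^{\alpha}G^i$ are the spray coefficients of $\alpha$, $s_{ij}=\tfrac{1}{2}(b_{i;j}-b_{j;i})$ is the antisymmetric part of the Levi-Civita covariant derivative of $\beta$, $s^i_{\ 0}=a^{ij}s_{jk}\xi^k$, and $P=P(x,\xi)$ is scalar and positively $1$-homogeneous in $\xi$; one derives this by substituting $F=\alpha+\beta$ into \eqref{eqn2.1.14} and separating $\alpha$-rational from $\alpha$-irrational pieces. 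Since $s^i_{\ 0}\equiv 0$ iff $\beta$ is closed (which, by Proposition \ref{ppn2.2}, is precisely the Douglas condition for Randers), both directions become transparent. For sufficiency, if $\alpha$ is projectively flat and $\beta$ is closed, choose coordinates making $\alpha$ projectively flat, so ${}^{\alpha}G^i=Q\,\xi^i$; then $s^i_{\ 0}=0$ yields $G^i=(P+Q)\,\xi^i$, and $F$ is projectively flat. For necessity, if $F$ is projectively flat then $G^i=\lambda\,\xi^i$, so $G^i\xi^j-G^j\xi^i\equiv 0$ is trivially a homogeneous polynomial of degree three in $\xi$; hence $F$ is Douglas, and Proposition \ref{ppn2.2} gives that $\beta$ is closed. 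Substituting $s^i_{\ 0}=0$ back into the decomposition gives ${}^{\alpha}G^i=(\lambda-P)\,\xi^i$, so $\alpha$ is locally projectively flat in the same coordinates.

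The main obstacle is the spray decomposition displayed above: deriving it from \eqref{eqn2.1.14} requires an explicit inversion of the fundamental tensor of $F=\alpha+\beta$ and careful separation of rational and $\alpha$-irrational terms in $\xi$, which is the only genuinely computational step in the argument. Once this decomposition is in hand, the biconditional reduces to the bookkeeping described above, with Proposition \ref{ppn2.2} extracting the closedness of $\beta$ from the projective flatness of $F$ and leaving the residue ${}^{\alpha}G^i=(\lambda-P)\,\xi^i$ that forces the projective flatness of $\alpha$.
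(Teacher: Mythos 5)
The paper does not prove this proposition; it is quoted verbatim from Chern--Shen \cite[Proposition 3.4.8]{SSZ}, so there is no in-paper argument to compare against. Your proof is correct and follows the standard route: it rests on exactly the spray decomposition $G^i=\bar G^i+P\xi^i+\alpha s^i_{\ 0}$ that the paper itself records as \eqref{eqnA5}--\eqref{eqnA6}, together with the characterization of projective flatness by $G^i=P\xi^i$ with $P$ positively $1$-homogeneous. The sufficiency direction is immediate as you state it. In the necessity direction, your shortcut through the Douglas property is legitimate but relies on one point worth making explicit: the vanishing of $G^i\xi^j-G^j\xi^i$ is verified only in the special coordinates in which $F$ is projectively flat, whereas Proposition \ref{ppn2.2} concerns the coordinate-independent Douglas condition; this is fine because the Douglas property is equivalent to the vanishing of the Douglas tensor and hence holds in one chart iff it holds in all, but that invariance is the hinge of the step and should be said. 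Alternatively one can avoid Proposition \ref{ppn2.2} altogether by separating the $\alpha$-rational and $\alpha$-irrational parts of $\bar G^i+\alpha s^i_{\ 0}=(\lambda-P)\xi^i$ to conclude $s^i_{\ 0}=0$ directly, which is closer to the computation one finds in the cited source. Either way, the argument is sound.
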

						
					\noindent  In the sequel, we restrict ourselves to the discussion on dimension $2$ only.

			\section{The Funk- Finsler metric in the constant curvature spaces}\label{sec3}
			In this section, we construct the Funk-Finsler structure in constant curvature spaces. More precisely, we give a formula of the Funk-Finsler metric in the space of constant curvature $\epsilon \in \{-1,0,1\}$.\\
			
			\noindent It is well known that the Funk-Finsler metric in Euclidean disc $\mathbb{D}_E(r)$ of radius $r$ is given by (See \cite[\S $2.1$]{SSZ})
			\begin{equation}\label{eqn0.1}
				F_{\mathbb{D}_E(r)}(x, \xi) =\frac{\sqrt{\left(r^2-|x|^2 \right) |\xi|^2+ \langle x , \xi \rangle ^2 }+ \langle x , \xi \rangle}{r^2-|x|^2},~~ \forall x \in \mathbb{D}_E(r),\ \xi \in T_x\mathbb{D}_E(r).
			\end{equation}
			In \cite[$\S 3$]{PAYS} Papadopoulos and Yamada define  the Funk and the Hilbert distance on a convex set in the constant curvature spaces. 
			In particular, the Papadopoulos-Yamada-Funk distance  $d_{H, \Omega}(x, y)$ between any two points $x$ and $y$ in a convex set $\Omega$ in hyperbolic space is given by \eqref{int1}.
			
			\noindent Recently, the present authors have computed the Funk-Finsler metric on the Klein unit disc $\mathbb{D}_K(1)$, which is a Randers metric and given by   (See \cite[Theorem $3.1$]{AHB2})
			\begin{thm}\label{eqn0.2}(\cite[$\S 3$]{AHB2})
				The Funk-Finsler structure on the Klein unit disc $\mathbb{D}_K(1)$ is a Randers metric, given by  $\mathcal{F} =\alpha_F +\beta_F, $ where   $\displaystyle \alpha_F (x, \xi) =\frac{\sqrt{\left(r^2-|x|^2 \right) |\xi|^2+ \langle x, \xi \rangle ^2 }}{r^2-|x|^2}$ with $r = \frac{e^2-1}{e^2+1}$, is the Riemannian-Klein metric in the disc $\mathbb{D}_K(1)$ and  $\beta$ is a closed $1$-form given by
				$\displaystyle \beta_F (x, \xi) =\frac{(1-r^2) \langle x ,\xi \rangle}{(r^2-|x|^2)(1-|x|^2)}$, \ for all $x \in \mathbb{D}_K(1),\ \xi \in T_x\mathbb{D}_K(1)$  .
			\end{thm}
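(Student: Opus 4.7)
The plan is to carry out the computation uniformly in $\epsilon\in\{-1,0,1\}$ by working in the projective model of the constant curvature space on $\mathbb{D}_E(r)$: the Klein disc for $\epsilon=-1$, the Beltrami central projection for $\epsilon=1$, and the identity map for $\epsilon=0$. In all three, the geodesics of the background Riemannian metric are Euclidean straight lines, so the exit point $\mathfrak{a}=\overrightarrow{xy}\cap\partial\Omega$ lies at $\mathfrak{a}=x+s_0\xi$ with
\begin{equation*}
s_0 = \frac{-\langle x,\xi\rangle + \sqrt{\langle x,\xi\rangle^2 + (r^2-|x|^2)|\xi|^2}}{|\xi|^2},
\end{equation*}
and $|\mathfrak{a}|^2=r^2$ yields the auxiliary identity $s_0^2|\xi|^2 + 2s_0\langle x,\xi\rangle = r^2-|x|^2$ that I will use repeatedly.

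Two uniform projective-coordinate expressions would serve as the backbone:
\begin{equation*}
|\xi|_\epsilon^2 = \frac{(1+\epsilon|x|^2)|\xi|^2 - \epsilon\langle x,\xi\rangle^2}{(1+\epsilon|x|^2)^2}, \qquad h_\epsilon(d_\epsilon(x,y))^2 = \frac{|x-y|^2 + \epsilon(|x|^2|y|^2 - \langle x,y\rangle^2)}{(1+\epsilon|x|^2)(1+\epsilon|y|^2)},
\end{equation*}
where $h_{-1}=\sinh$, $h_0(t)=t$, $h_1=\sin$. The second formula is the content of the spherical distance identity derived in the Appendix (Sect.\ref{sec6}), combined with the analogous Klein distance formula used in \cite{AHB2}; the first is the Riemannian norm of curvature $\epsilon$ expressed in these coordinates.

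Next I would compute $\mathcal{F}_\epsilon(x,\xi)$ as the one-sided directional derivative $\lim_{t\to 0^+} d_{F_\epsilon}(x,\, x+t\xi)/t$. For small $t>0$ the point $x+t\xi$ lies on the geodesic from $x$ to $\mathfrak{a}$, so the exit point for the pair $(x,x+t\xi)$ coincides with the one determined by $(x,\xi)$, and the Papadopoulos--Yamada--Funk formula gives $d_{F_\epsilon}(x, x+t\xi) = \log h_\epsilon(L_\epsilon) - \log h_\epsilon(L_\epsilon - \sigma(t))$, where $L_\epsilon := d_\epsilon(x,\mathfrak{a})$ and $\sigma(t) := d_\epsilon(x, x+t\xi)$. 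A short expansion of $h_\epsilon(\sigma(t))^2$ using the projective-coordinate distance formula yields $\sigma'(0)=|\xi|_\epsilon$, and hence
\begin{equation*}
\mathcal{F}_\epsilon(x,\xi) = |\xi|_\epsilon\,\frac{h_\epsilon'(L_\epsilon)}{h_\epsilon(L_\epsilon)}.
\end{equation*}

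The decisive and most delicate step is the algebraic simplification of this expression into Randers form. Using the Lagrange identity $|x|^2|\mathfrak{a}|^2 - \langle x,\mathfrak{a}\rangle^2 = s_0^2(|x|^2|\xi|^2 - \langle x,\xi\rangle^2)$, the boundary relation $|\mathfrak{a}|^2=r^2$, and the differential identity $(h_\epsilon')^2 = 1-\epsilon h_\epsilon^2$, I expect the numerator of $1-\epsilon h_\epsilon(L_\epsilon)^2$ to collapse to the perfect square $(1+\epsilon|x|^2+\epsilon s_0\langle x,\xi\rangle)^2$. Substituting back should produce the clean identity
\begin{equation*}
\mathcal{F}_\epsilon(x,\xi) = \frac{1}{s_0} + \frac{\epsilon\langle x,\xi\rangle}{1+\epsilon|x|^2}.
\end{equation*}
The first summand is precisely the Euclidean Funk--Finsler metric of \eqref{eqn0.1}, which already splits as $\alpha_F + \langle x,\xi\rangle/(r^2-|x|^2)$; combining the two linear-in-$\xi$ contributions then yields the asserted one-form $\beta_F(x,\xi) = (1+\epsilon r^2)\langle x,\xi\rangle/\bigl((r^2-|x|^2)(1+\epsilon|x|^2)\bigr)$. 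Closedness of $\beta_F$ is immediate, since it equals $\tfrac12 f(|x|^2)\,d(|x|^2)$ for a scalar function $f$, hence it is exact. The hard part is the perfect-square collapse in the $(h_\epsilon')^2$ computation: without that cancellation, the Riemannian-looking factor $|\xi|_\epsilon\, h_\epsilon'/h_\epsilon$ would not linearize correctly against the explicit form of $s_0$ to produce a Randers metric.
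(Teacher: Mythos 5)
Your proposal is correct, and I have checked the one step you flagged as delicate: with $r^2=|x|^2+2s_0\langle x,\xi\rangle+s_0^2|\xi|^2$ one indeed gets
\begin{equation*}
h_\epsilon'(L_\epsilon)^2=1-\epsilon h_\epsilon(L_\epsilon)^2=\frac{\bigl(1+\epsilon|x|^2+\epsilon s_0\langle x,\xi\rangle\bigr)^2}{(1+\epsilon|x|^2)(1+\epsilon r^2)},\qquad h_\epsilon(L_\epsilon)^2=\frac{s_0^2|\xi|_\epsilon^2(1+\epsilon|x|^2)}{1+\epsilon r^2},
\end{equation*}
so the collapse to $\mathcal{F}_\epsilon=\tfrac{1}{s_0}+\tfrac{\epsilon\langle x,\xi\rangle}{1+\epsilon|x|^2}$ and hence to the stated Randers form does occur. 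Note that the paper does not reprove this theorem but quotes it from \cite{AHB2}; the proof actually written out here is for the spherical analogue (Theorem \ref{thm3.1} together with the Appendix, Sect.~\ref{sec6}), and it follows the same overall strategy as yours: projective chart, exit point on $\partial\mathbb{D}_E(r)$, Busemann--Mayer limit. The difference is organizational. The paper first simplifies the \emph{ratio} $h_\epsilon(d(x,\mathfrak{a}))/h_\epsilon(d(y,\mathfrak{a}))$, in which the awkward factors cancel, obtaining the closed-form distance $d(x,y)=\log\frac{|x-\mathfrak{a}|}{|y-\mathfrak{a}|}+\tfrac12\log\frac{1+\epsilon|y|^2}{1+\epsilon|x|^2}$ and only then differentiates, so the Euclidean Funk term and the exact correction $\tfrac12 d\log(1+\epsilon|x|^2)$ appear separately with no perfect-square identity needed. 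You differentiate first and pay for it with the $(h_\epsilon')^2$ computation; in exchange your argument is written uniformly in $\epsilon\in\{-1,0,1\}$ and so delivers the Klein case, the spherical case, and \eqref{eqn0.1} in one pass, which is essentially how the paper later assembles Theorem \ref{thm1.1}. Either route is complete; if you keep yours, state explicitly that $h_\epsilon'(L_\epsilon)>0$ (automatic for $\epsilon\in\{-1,0\}$, and guaranteed for $\epsilon=1$ by the diameter bound $\operatorname{diam}(\Omega)\le\pi/2$) so that the positive square root is the correct one.
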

			\vspace{.3cm}
			\noindent
			Further, Papadopoulos and Yamada defined the Funk distance  $d_{F, \Omega}(x, y)$ between any two points $x$ and $y$ in a convex set $\Omega$  with $diam(\Omega)\leq \frac{\pi}{2}$, which we now call Papadopoulos-Yamada-Funk distance, and is given by,
			\begin{equation}\label{ath1}
				d_{F_H,\Omega}(x, y):=\log\frac{\sin d_s(x,\mathfrak{a})}{\sinh d_s(y,\mathfrak{a})},
			\end{equation}
			where  $d_s$ represents the spherical distance function, $\mathfrak{a}=\overrightarrow{xy} \cap \partial \Omega$ and $\overrightarrow{xy}$ denotes the geodesic ray starting from $x$ and passing through $y$ with in $\Omega$.

			\subsection{The Papadopoulos-Yamada-Funk  metric on spherical disc}\label{sb1}

			\noindent In this subsection, we obtain the Funk-Finsler structure in local coordinates on a strongly convex spherical domain $\Omega$ with diameter $diam(\Omega) \leq \frac{\pi}{2}$.\\
			
			\noindent As is well known, the geodesic disc of diameter less than or equal to $\pi$ is a convex set on the unit sphere. In the following, we compute the Funk-Finsler structure on a geodesic disc of diameter less than or equal to $\frac{\pi}{2}$ on a unit sphere $\mathbb{S}^2$.\\
			
			\noindent Consider the immersion $\phi : \mathbb{R}^2 \rightarrow \mathbb{S}^2_+$, given by 
			\begin{equation}\label{eqn03.1}
				\phi(x)=\left( \frac{x}{\sqrt{1+|x|^2}},\frac{1}{\sqrt{1+|x|^2}} \right),\quad x=(x^1,x^2).
			\end{equation}
			The immersion can also be thought of as the parametrization of the upper hemisphere $\mathbb{S}^2_+=\{\tilde{x}=(\tilde{x}^1,\tilde{x}^2,\tilde{x}^3) \in \mathbb{S}^2 :   \tilde{x}^3 > 0,~  \}$. The restriction of the map $\phi$ on the disc $\mathbb{D}_E(r),~0< r \leq 1$ is a parametrization of the geodesic disc $D_{\tilde{r}}(\tilde{p})$ with $\tilde{p}=(0,0,1)$ and $0 < \tilde{r} =\tan^{-1} r \leq  \frac{\pi}{4}$. In reference of this chart, we compute the Funk-Finsler structure on a geodesic disc of radius $\tilde{r} \leq \frac{\pi}{4}$.\\

			\noindent Let $\bar{g}$ be the flat metric in $\mathbb{R}^3$ given by $\bar{g}= \sqrt{(d \tilde{x}^1)^2+(d \tilde{x}^2)^2+(d \tilde{x}^3)^2}$. Then the pullback metric in $\mathbb{R}^2$ is given by
			\begin{equation}\label{eqn2.5.11811}
				g= \phi^* \bar{g}=   \sqrt{(d\phi ^1)^2+(d\phi ^2)^2+(d\phi ^3)^2}(x,\xi)=\frac{\sqrt{(1+|x|^2)|\xi|^2-\langle x, \xi \rangle ^2}}{(1+|x|^2)},
			\end{equation}
			where $(x, \xi) \in T\mathbb{R}^2$, by a simple computation, it can be shown that this metric $g= \phi^* \bar{g}$, known as spherical metric, is projectively flat, that is, its geodesics are line segments in the chart and  is of constant curvature  $+1$. 
			\begin{thm}\label{thm3.1}
				The Funk-Finsler structure for the Papadopoulos-Yamada-Funk distance on spherical geodesic disc  with diameter less than or equal to $\frac{\pi}{2}$,  is a Randers metric   given by  $\mathcal{F} =\alpha_F +\beta_F, $ where   $\displaystyle \alpha (x, \xi) =\frac{\sqrt{\left(r^2-|x|^2 \right) |\xi|^2+ \langle x, \xi \rangle ^2 }}{r^2-|x|^2}$ is a Riemannian metric and $\displaystyle \beta (x, \xi) =\frac{(1+r^2) \langle x,\xi \rangle}{(r^2-|x|^2)(1+|x|^2)}$
				is a closed $1$-form, for all $x \in \mathbb{D}_E(r),\ \xi \in T_x\mathbb{D}_E(r)$
				with $0 < r \leq 1$.

			\end{thm}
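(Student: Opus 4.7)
Since the spherical metric $g$ in \eqref{eqn2.5.11811} is projectively flat in the chart $\phi$, the spherical geodesic starting at $x$ with initial velocity $\xi$ traces the Euclidean straight ray $\sigma(t) = x + t\xi$ inside $\mathbb{D}_E(r)$. Consequently the boundary point appearing in the Papadopoulos–Yamada–Funk distance is $\mathfrak{a} = x + t_0 \xi$, where $t_0 > 0$ is the positive root of $|x + t\xi|^2 = r^2$; rationalizing yields
\begin{equation*}
\frac{1}{t_0} = \frac{\langle x,\xi\rangle + \sqrt{\langle x,\xi\rangle^2 + (r^2-|x|^2)|\xi|^2}}{r^2-|x|^2}.
\end{equation*}
For small $t>0$ the point $\mathfrak{a}$ depends only on $x$ and the direction of $\xi$, so it is constant along $\sigma$ and can be differentiated past.

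The Funk--Finsler norm is defined as $\mathcal{F}(x,\xi) = \frac{d}{dt}\big|_{t=0^+} d_{F_S,\Omega}(x, \sigma(t))$. Writing $d_{F_S,\Omega}(x,\sigma(t)) = \log\sin d_s(x,\mathfrak{a}) - \log\sin d_s(\sigma(t),\mathfrak{a})$ and using that $\sigma$ moves toward $\mathfrak{a}$ along the spherical geodesic with $g$-speed $g(x,\xi)$, one obtains
\begin{equation*}
\mathcal{F}(x,\xi) = \cot d_s(x,\mathfrak{a})\,g(x,\xi).
\end{equation*}
To evaluate this I would compute $\cos d_s(x,\mathfrak{a}) = \langle \phi(x),\phi(\mathfrak{a})\rangle_{\mathbb{R}^3}$ directly from \eqref{eqn03.1}, and derive $\sin d_s$ from $\sin^2 = 1 - \cos^2$ together with $|\mathfrak{a}| = r$. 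The two-dimensional identity $(1+|x|^2)|\xi|^2 - \langle x,\xi\rangle^2 = |\xi|^2 + (x^1\xi^2 - x^2\xi^1)^2$ causes the square-root factors of $g(x,\xi)$ and of $\sin d_s(x,\mathfrak{a})$ to cancel, collapsing the product to the rational form
\begin{equation*}
\mathcal{F}(x,\xi) = \frac{1}{t_0} + \frac{\langle x,\xi\rangle}{1+|x|^2}.
\end{equation*}

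Substituting the explicit expression for $1/t_0$ and combining the two linear terms in $\langle x,\xi\rangle$ over the common denominator $(r^2-|x|^2)(1+|x|^2)$ produces exactly the stated Randers decomposition $\mathcal{F} = \alpha_F + \beta_F$. Closedness of $\beta_F$ is immediate: writing $\beta_F = h(|x|^2)\langle x, dx\rangle$ with $h(u) = (1+r^2)/\bigl((r^2-u)(1+u)\bigr)$, this $1$-form is exact, being the differential of $\tfrac{1}{2}\int_0^{|x|^2} h(s)\,ds$. The main technical hurdle is the square-root cancellation in the second paragraph, which requires careful bookkeeping of the $(1+|x|^2)^{1/2}$ factors produced by the stereographic-type immersion $\phi$; once that cancellation is identified, the Randers structure falls out cleanly and the proof parallels the Klein-disc computation in Theorem \ref{eqn0.2}, with the Euclidean distance to the boundary replaced by its spherical sine analogue in the first differentiation step.
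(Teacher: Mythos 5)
Your proposal is correct, and it reaches the stated Randers form by a route that is organizationally different from the paper's, though it rests on the same geometric input (projective flatness of the chart \eqref{eqn03.1}, hence $\mathfrak{a}=x+t_0\xi$ with $1/t_0$ equal to the Euclidean Funk norm). The paper first derives, in the Appendix, the closed-form two-point distance
\begin{equation*}
d(x,y)=\log\frac{|x-\mathfrak{a}|}{|y-\mathfrak{a}|}+\frac{1}{2}\log\frac{1+|y|^2}{1+|x|^2},
\end{equation*}
by integrating the spherical line element along the chord and manipulating $\tan d_s$, and only then applies the Busemann--Mayer limit $\lim_{t\to0}d(x,\gamma(t))/t$; the two summands differentiate to the Euclidean Funk metric $1/t_0$ and to $\langle x,\xi\rangle/(1+|x|^2)$ respectively, which recombine into $\alpha+\beta$. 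You instead differentiate $-\log\sin d_s(\sigma(t),\mathfrak{a})$ directly, using that $\frac{d}{dt}d_s(\sigma(t),\mathfrak{a})\big|_{t=0}=-g(x,\xi)$ along the pregeodesic ray, to get the intermediate formula $\mathcal{F}(x,\xi)=\cot d_s(x,\mathfrak{a})\,g(x,\xi)$, and then you only need the one-point quantities $\cos d_s(x,\mathfrak{a})=\frac{1+|x|^2+t_0\langle x,\xi\rangle}{\sqrt{(1+|x|^2)(1+r^2)}}$ and $\sin d_s(x,\mathfrak{a})=\frac{t_0\sqrt{(1+|x|^2)|\xi|^2-\langle x,\xi\rangle^2}}{\sqrt{(1+|x|^2)(1+r^2)}}$; the square roots cancel against $g(x,\xi)$ exactly as you claim, giving $\mathcal{F}=1/t_0+\langle x,\xi\rangle/(1+|x|^2)$, which is the same splitting the paper obtains. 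Your route is shorter and avoids the general two-point trigonometric computation; the paper's route yields the explicit global distance function on the spherical Funk disc as a byproduct, which has independent interest. Your closedness argument (writing $\beta=h(|x|^2)\langle x,dx\rangle$ as an exact form) is equivalent to the paper's observation that $\beta=df$ with $f(x)=\frac{1}{2}\log\bigl(\frac{1+|x|^2}{r^2-|x|^2}\bigr)$. To be fully complete you should also record, as the paper does, that $\|\beta\|_\alpha^2=\frac{|x|^2(1+r^2)^2}{r^2(1+|x|^2)^2}<1$, since this is what makes $\alpha+\beta$ a genuine (positive-definite) Randers metric.
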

			\begin{proof}
				Let $\Omega= D_{\tilde{r}}(\tilde{p})=\left\lbrace \tilde{x} \in \mathbb{S}^2_+ :  d_s(\tilde{p},\tilde{x}) < \tilde{r} \right\rbrace$, where $\tilde{p}=(0,0,1)$ and $0< \tilde{r} \leq \frac{\pi}{4} $ be a geodesic disc with diameter $2\tilde{r} \leq \frac{\pi}{2}$. 
				In view of equation \eqref{ath1}, the Papadopoulos-Yamada-Funk distance on the geodesic disc $D_{\tilde{r}}(\tilde{p})$, which is clearly a convex set on the sphere, is given by 
				\begin{eqnarray}
					d_{F}(\tilde{x}, \tilde{y})=\log \frac{\sin d_s(\tilde{x},\tilde{\mathfrak{a}})}{\sin d_s(\tilde{y},\tilde{\mathfrak{a}})},
				\end{eqnarray}
				where $d_s(\tilde{x},\tilde{\mathfrak{a}})$ denotes the spherical distance between $\tilde{x}$ and $\tilde{\mathfrak{a}}=R_{\tilde{x},\tilde{y}} \cap \partial \Omega$. Here, $R_{\tilde{x},\tilde{y}}$ denotes the geodesic ray starting from $\tilde{x}$ passing through $\tilde{y}$.\\
				\begin{center}
					\begin{tikzpicture}
						\shade[ball color = gray!60, opacity = 0.6] (0,0) circle (2cm);
						\draw (0,0) circle (2cm);
						\draw (-1.95,.5) arc (180:360:1.95 and 0.18);
						\draw[color=red] (0,1) arc (160:52:1 and 0.6);
						\draw[dashed,color=red] (1.56,1.26)arc  (0:-51:.8);
						\draw[dashed] (1.9,.5) arc (0:180:1.9 and 0.2);
						\draw (-2,0) arc (180:360:2 and 0.2);
						\draw[dashed] (2,0) arc (0:180:2 and 0.2);
						\fill[fill=black] (0,0) circle (1pt);
						\node at (-.2,1.2){$\tilde x$};\fill[fill=blue] (0,1) circle (.5pt);
						\node at (.5,1.6){$\tilde y$};\fill[fill=blue] (.7,1.38) circle (.5pt); \node at (2,.9){$\Omega$};
						\node at (1.2,.9){$\tilde a$};\fill[fill=blue] (1.27,.66) circle (.5pt);
						
						% \draw[dashed] (0,0 ) -- node[above]{$r$} (2,0);
					\end{tikzpicture}
				\end{center}
				
				\noindent Considering the chart defined in \eqref{eqn03.1}, let $x=\phi^{-1}(\tilde{x})$, $y=\phi^{-1}(\tilde{y})$ and $\mathfrak{a}=\phi^{-1}(\tilde{\mathfrak{a}})$, which is also the point of intersection of the ray segment joining $x$ through $y$ and $\partial \mathbb{D}_E(r)$ in the chart. A simple computation shows that the Papadopoulos-Yamada-Funk distance between $x$ and $y$ in the $\mathbb{D}_E(r)$ is given by (see Appendix for detail)
				\begin{eqnarray}
					d(x,y)  = \log \frac{|x-\mathfrak{a}|}{|y-\mathfrak{a}|}  +\frac{1}{2}\log \frac{1+|y|^2}{1+|x|^2}.
				\end{eqnarray}

				\noindent Let  $\gamma(t)$  be a $C^1$-curve in $\mathbb{D}_E(r)$ such that $\gamma(0)=x$ and $\dot{\gamma}(0)=\xi$, then the induced Funk-Finsler structure in $\mathbb{D}_E(r)$ is given by (see Busemann-Mayer theorem \cite[\S 6.3]{DSSZ})
				\begin{eqnarray}
					\nonumber   \mathcal{F}(x,\xi)&=&\lim  \limits_{t \rightarrow 0 } \frac{d \left( x, \gamma(t)\right)}{t}\\
					\nonumber &=& \lim  \limits_{t \rightarrow 0 } \frac{\log \frac{|x-\mathfrak{a}|}{|\gamma(t)-\mathfrak{a}|}  +\frac{1}{2}\log \frac{1+|\gamma(t)|^2}{1+|x|^2}}{t}\\
					\nonumber    &=&\frac{\sqrt{\left(r^2-|x|^2 \right) |\xi|^2+ \langle x, \xi \rangle ^2 }}{r^2-|x|^2}+ \frac{(1+ r^2) \langle x,\xi \rangle}{(r^2-|x|^2)(1+|x|^2)}\\
					\label{1}   &=& \alpha (x, \xi)+\beta (x, \xi).
				\end{eqnarray}
				
				\vspace{.5cm}
				\noindent
				If we write $\alpha (x, \xi)=\sqrt{a_{ij}\xi^i\xi^j}$, then 
				
				\begin{equation}\label{eqn2.5.118}
					(a_{ij})=\frac{1}{(r^2-|x|^2)^2}
					\begin{pmatrix}
						r^2-|x|^2+(x^1)^2& x^1x^2 \\
						x^1x^2 & r^2-|x|^2+(x^2)^2
					\end{pmatrix},
				\end{equation}
				$\det(a_{ij})=\frac{r^2}{(r^2-|x|^2)^3}$ and its inverse matrix is given by 
				\begin{equation}\label{eqn2.5.119}
					(a^{ij})=(a_{ij})^{-1}= \frac{(r^2-|x|^2)}{r^2 } 
					\begin{pmatrix}
						r^2-|x|^2+(x^2)^2& -x^1x^2 \\
						-x^1x^2 & r^2-|x|^2+(x^1)^2
					\end{pmatrix}.
				\end{equation}
				Furthermore, let $\beta(x, \xi)  =b_i(x)\xi^i$; then the coefficients $b_i(x)$ of the $1$-form $\beta_F$  is given by
				\begin{equation}\label{eqn2.5.120}
					b_i(x)=\frac{(1+r^2) x^i }{(r^2-|x|^2)(1+|x|^2)}, 
				\end{equation}
				and hence \begin{equation}\label{eqn2.5.121}
					||\beta||^2_{\alpha}=a^{ij}b_ib_j=\frac{|x|^2(1+r^2)^2}{r^2(1+|x|^2)^2}  < 1.
				\end{equation}
				\noindent It is easy to observe that $\beta=df(x)$, where $f(x)=\frac{1}{2}\log\left(\frac{1+|x|^2}{r^2-|x|^2} \right).$
				Thus, $\mathcal{F}(x, \xi)=\alpha_(x, \xi)+\beta(x, \xi)$  is a Randers metric with a closed $1$-form $\beta$.
			\end{proof}
			
			\subsection{Proof of the Theorem \ref{thm1.1}}\label{sb2}
			
			In view of  \eqref{eqn0.1}, Theorem \ref{eqn0.2} and Theorem \ref{thm3.1}, we obtain, the Funk-Finsler metric on the  disc $\mathbb{D}_E(r)$ equipped with Riemmanian metric of constant curvature $\epsilon=\{0,1,-1\}$ is given by
			\begin{equation} \label{eq 3.2.1}
				\mathcal{F}_{\epsilon}(x,\xi )=\alpha(x,\xi )+\beta(x,\xi ),
			\end{equation}
			where $$\alpha(x,\xi )=\frac{\sqrt{\left(r^2-|x|^2 \right) |\xi|^2+ \langle x, \xi \rangle ^2 }}{r^2-|x|^2} \ \ \text{and} \ \ 
			\beta(x,\xi )=\frac{(1+\epsilon r^2) \langle x,\xi \rangle}{(r^2-|x|^2)(1+\epsilon|x|^2)},$$
			for all $(x,\xi)\in T\mathbb{D}_E(r)$. 
			
			% where $r=\frac{e^2-1}{e^2+1},~1,~\tan \tilde{r} \left( \tilde{r} \leq\frac{\pi}{4} \right)$ and $\epsilon= -1, 0,1$ for the space of constant curvature $ -1, 0,1$ respectively.\\
			%Here the coefficients $b_i(x)$ of the $1$-form $\beta_F$  is given by \begin{equation}\label{eqn2} b_i(x)=\frac{(1+\epsilon r^2) x^i }{(r^2-|x|^2)(1+\epsilon |x|^2)},  \end{equation}
			\noindent Further, we write $\beta(x, \xi)  =b_i(x)\xi^i$, then the coefficients $b_i(x)$ of the $1$-form $\beta$  is given by
			\begin{equation}\label{eqn2}
				b_i(x)=\frac{(1+\epsilon r^2) x^i }{(r^2-|x|^2)(1+\epsilon |x|^2)}, 
			\end{equation}
			Hence, \begin{equation}\label{eqn20}
				||\beta||^2_{\alpha}=a^{ij}b_ib_j=\frac{|x|^2(1+\epsilon r^2)^2}{r^2(1+\epsilon |x|^2)^2}  < 1.
			\end{equation}
			\noindent Moreover, it is easy to observe that  $\beta=df(x)$, where $f(x)=\frac{1}{2}\log\left(\frac{1+\epsilon |x|^2}{r^2-|x|^2} \right)$.
			Thus, $\mathcal{F}_{\epsilon}$  is a Randers metric with a closed $1$-form $\beta$.\\           
			\begin{rem} \begin{enumerate}[(i)]
					
					\item For $\epsilon=-1$, the Funk-Finsler metric $\mathcal{F}_{\epsilon}$ is defined on the disc $\mathbb{D}_E(r)$ with radius $r<1$, which can be realized as Funk-Finsler metric on Klein hyperbolic disc centered at the origin and hyperbolic radius $\tilde r=\frac{1}{2}\log \frac{1+r}{1-r}$. In particular if $r=\frac{e^2-1}{e^2+1}$, $\mathcal{F}_{\epsilon}$ can be realized as Funk-Finsler metric on Klein hyperbolic disc centered at origin and unit hyperbolic radius. 
					\item For $\epsilon=1$, the Funk-Finsler metric $\mathcal{F}_{\epsilon}$ is defined on the disc $\mathbb{D}_E(r)$ with radius $r<1$, which can be realized as Funk-Finsler metric on spherical geodesic disc of radius $\arctan r$.
				\end{enumerate}
			\end{rem}

			%  For $r=\frac{e^2-1}{e^2+1},~1,~\tan \tilde{r}$ and $\epsilon= -1, 0,1$ as space is  constant curvature $ -1, 0,1$ respectively.%The distance function induced $d_\mathcal{F}$ of $\mathcal{F}$ is given by 

			\section{Some curvatures of the Funk-Finsler structure on the constant curvature spaces}\label{sec4}
			In this section, we explicitly obtain the expressions for the $S$-curvature, the Riemann curvature, the Ricci curvature and the flag curvature of the Funk-Finsler metric $\mathcal{F}_\epsilon$ on a disc $\mathbb{D}_E(r)$ equipped with Riemannian metric of constant curvature $\epsilon=\{0,1,-1\}$. 
			
			\subsection{Spray coefficients and $S$-curvatures of the Funk-Finsler metric $\mathcal{F}_\epsilon$}\label{sb3}
			In this subsection,
			we recall the formula for $S$-curvature of 
			a general Randers metric $F=\alpha+\beta$, where $\alpha(x, \xi)=\sqrt{a_{ij}\xi^i\xi^j}$ and $\beta(x, \xi)=b_i(x)\xi^i$.\\
			Let  $\bar{\Gamma}^k_{ij}(x)$  denote the  Christoffel symbols of Riemannian metric $\alpha$. Then we have,
			\begin{equation}\label{eqnn 4.3.48}
				b_{i|j}:=\frac{\partial b_i}{\partial x^j}-b_k \bar{\Gamma}^k_{ij}.
			\end{equation}
			We introduce the following notations,
			\begin{equation}\label{eqnn 4.3.50}
				r_{ij}:=\frac{1}{2}\left( b_{i|j}+b_{j|i}\right),~~ s_{ij}:=\frac{1}{2}\left( b_{i|j}-b_{j|i}\right).
			\end{equation}
			\begin{equation}\label{eqnn 4.3.51}
				s^i_j:=a^{ih}s_{hj},~~ s_j:=b_i s^i_j=b^j s_{ij}, ~~r_j:=b^i r_{ij}, ~~b^j=a^{ij} b_i,
			\end{equation}
			\begin{equation}\label{eqnn 4.3.52}
				e_{ij}:=r_{ij}+b_is_j+b_js_i,
			\end{equation}
			\begin{equation*}e_{00}:=e_{ij}\xi^i\xi^j,~~ s_0:=s_i\xi^i ~~ \text{and}~~ s^i_0:=s_j^i\xi^j.\end{equation*}\\
			Now consider, 
			\begin{equation}\label{eqnA0}
				\rho:=\log \sqrt{\left( 1-||\beta||^2_\alpha\right)},~\mbox{and}~ \rho_0:=\rho_i\xi^i,\rho_i:=\rho_{x^i}(x).
			\end{equation}
			It is well known that the $S$-curvature of the Randers metric $F=\alpha +\beta $ is given by,
			\begin{equation}\label{eqnn 4.65}
				S=(n+1)\Big[\frac{e_{00}}{2F}-(s_0+\rho_0) \Big],
			\end{equation}
			see \cite[$\S 3.2$]{CXSZ} for more details.
			\vspace{.5cm}
			\begin{thm}\label{thm4.1}
				The  Funk-Finsler metric $\mathcal{F}_\epsilon$, , given in Theorem \ref{thm1.1},, on the disc $\mathbb{D}_E(r)$ equipped with Riemannian metric of constant curvature $\epsilon=\{0,1,-1\}$, is projectively flat and its $S$-curvature   is given by:
				\begin{equation*}
					\textbf{S}_\epsilon(x,\xi)= \frac{3(1+\epsilon r^2)\left[ (1+\epsilon |x|^2)|\xi|^2-2\langle x , \xi \rangle^2\right] }{2\mathcal{F}_\epsilon (r^2-|x|^2)(1+\epsilon |x|^2)^2}+\frac{3\langle x , \xi \rangle (1+\epsilon r^2)^2(1-\epsilon |x|^2)}{(r^2-|x|^2)(1-\epsilon^2 r^2|x|^2)(1+\epsilon |x|^2)},
				\end{equation*}
				where $(x,\xi) \in T\mathbb{D}_E(r)$.
			\end{thm}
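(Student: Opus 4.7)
The plan is to leverage the Randers structure $\mathcal{F}_\epsilon=\alpha+\beta$ established in Theorem \ref{thm1.1}, together with the fact (also observed there) that $\beta=df$ with $f(x)=\tfrac12\log\bigl(\tfrac{1+\epsilon|x|^2}{r^2-|x|^2}\bigr)$ is closed. Projective flatness is then almost immediate: the Riemannian metric $\alpha$ has the classical Beltrami--Klein form $\alpha(x,\xi)=\sqrt{(r^2-|x|^2)|\xi|^2+\langle x,\xi\rangle^2}/(r^2-|x|^2)$, whose geodesics are Euclidean line segments (see Definition \ref{PF}), so $\alpha$ is projectively flat; Proposition \ref{ppn2.3} combined with the closedness of $\beta$ then gives projective flatness of $\mathcal{F}_\epsilon$.

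For the $S$-curvature I would use formula \eqref{eqnn 4.65}. The decisive simplification is that $\beta$ is closed, hence $b_{i|j}=b_{j|i}$, so $s_{ij}=0$, which forces $s^i_j=s_i=s_0=0$ and collapses $e_{ij}=r_{ij}=b_{i|j}$. With $n=2$ this reduces \eqref{eqnn 4.65} to
\begin{equation*}
\textbf{S}_\epsilon(x,\xi)=3\Bigl[\frac{r_{00}}{2\mathcal{F}_\epsilon}-\rho_0\Bigr],
\end{equation*}
which already explains the factor $3$ appearing in both terms of the stated formula.

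The concrete computation then proceeds in three steps. First, read off the Christoffel symbols $\bar\Gamma^k_{ij}$ of $\alpha$ from $(a_{ij})$, $(a^{ij})$ in \eqref{eqn2.5.118}--\eqref{eqn2.5.119} (with the usual $1/(r^2-|x|^2)$ rational structure). Second, starting from $b_i=\frac{(1+\epsilon r^2)x^i}{(r^2-|x|^2)(1+\epsilon|x|^2)}$ in \eqref{eqn2}, compute $b_{i|j}=\partial_j b_i-b_k\bar\Gamma^k_{ij}$ and contract with $\xi^i\xi^j$; I expect after simplification
\begin{equation*}
r_{00}=\frac{(1+\epsilon r^2)\bigl[(1+\epsilon|x|^2)|\xi|^2-2\langle x,\xi\rangle^2\bigr]}{(r^2-|x|^2)(1+\epsilon|x|^2)^2},
\end{equation*}
matching the first term of the claim. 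Third, using \eqref{eqn20} I compute
$1-\|\beta\|_\alpha^2=\tfrac{r^2(1+\epsilon|x|^2)^2-|x|^2(1+\epsilon r^2)^2}{r^2(1+\epsilon|x|^2)^2}$, take $\rho=\tfrac12\log(1-\|\beta\|_\alpha^2)$, differentiate with respect to $x^i$ and contract with $\xi^i$; the numerator factors as a product involving $(1-\epsilon^2r^2|x|^2)$ and $(1-\epsilon|x|^2)$, and a careful bookkeeping should deliver exactly $-\rho_0=\frac{\langle x,\xi\rangle (1+\epsilon r^2)^2(1-\epsilon|x|^2)}{(r^2-|x|^2)(1-\epsilon^2r^2|x|^2)(1+\epsilon|x|^2)}$, matching the second term of the claim.

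The main obstacle is the sheer bookkeeping in steps two and three: the Christoffel symbols of $\alpha$ are not diagonal, so cross-terms in $b_k\bar\Gamma^k_{ij}\xi^i\xi^j$ must be tracked carefully, and the derivative of $\log(1-\|\beta\|_\alpha^2)$ produces a rational function whose numerator needs nontrivial factorization before it simplifies to the form with $(1-\epsilon^2 r^2|x|^2)(1-\epsilon|x|^2)$ in the denominator-numerator balance. Since the final expression must hold uniformly in $\epsilon\in\{-1,0,1\}$, I would organize the computation by keeping $\epsilon$ as a symbolic parameter throughout rather than splitting into cases, which both shortens the algebra and makes the uniform structure of the answer transparent.
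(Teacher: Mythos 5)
Your proposal follows essentially the same route as the paper: exploit closedness of $\beta$ to kill $s_{ij},s^i_j,s_0$, reduce the $S$-curvature formula \eqref{eqnn 4.65} with $n=2$ to $3\bigl[\tfrac{r_{00}}{2\mathcal{F}_\epsilon}-\rho_0\bigr]$, and compute $r_{00}$ from $b_{i|j}$ and $\rho_0$ from $1-\|\beta\|_\alpha^2=\tfrac{(r^2-|x|^2)(1-\epsilon^2r^2|x|^2)}{r^2(1+\epsilon|x|^2)^2}$; for projective flatness the paper reaches the same conclusion by exhibiting the spray coefficients $G^i$ as a multiple of $\xi^i$ before invoking Proposition \ref{ppn2.3}, which is equivalent to your direct appeal to that proposition. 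One caveat: your anticipated value of $r_{00}$ reproduces a typo in the theorem statement --- the contraction of $b_{i|j}=\frac{(1+\epsilon r^2)}{(r^2-|x|^2)(1+\epsilon|x|^2)}\bigl[\delta_{ij}-\frac{2\epsilon x^ix^j}{1+\epsilon|x|^2}\bigr]$ actually gives $-2\epsilon\langle x,\xi\rangle^2$ rather than $-2\langle x,\xi\rangle^2$ in the bracket (as the paper's own proof displays in its final formula), a difference that is visible already at $\epsilon=0$, where $r_{00}$ must reduce to $|\xi|^2/(r^2-|x|^2)$.
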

			\begin{proof}
				From equation \eqref{eqnn 4.65}, to calculate the $S$-curvature of  Randers metric $\mathcal{F}_\epsilon$, given in Theorem \ref{thm1.1}, we proceed as follows. The Christoffel symbols $\bar{\Gamma}^k_{ij}(x)$ of Riemannian  metric $\alpha$  are given by
				\begin{eqnarray}
					\label{eqn0}  \bar{\Gamma}^k_{ij}(x):=\frac{x^i \delta_{kj}+x^j \delta_{ki}}{(r^2-|x|^2)},~~ \mbox{for all}~~ x\in \mathbb{D}_E(r).
				\end{eqnarray}
				Clearly,
				\begin{equation}\label{eqn01} 
					\bar{\Gamma}^k_{ij}(x)=\bar{\Gamma}^k_{ji}(x).
				\end{equation}
				Using expression for $b_i$ from \eqref{eqn2}, we get,
				\begin{equation}\label{eqnA1}
					\frac{\partial b_i}{\partial x^j}=\frac{\partial b_j}{\partial x^i}=\frac{(1+\epsilon r^2)}{(r^2-|x|^2)(1+\epsilon |x|^2)}\Bigg[ \delta_{ij}+\frac{2x^ix^j(1-\epsilon r^2+2\epsilon |x|^2)}{(r^2-|x|^2)(1+\epsilon |x|^2)} \Bigg].
				\end{equation}
				Substituting \eqref{eqn01} and \eqref{eqnA1}
				in \eqref{eqnn 4.3.48}, we obtain
				\begin{equation}\label{eqnA2}
					b_{i|j} =b_{j|i}=\frac{(1+\epsilon r^2)}{(r^2-|x|^2)(1+\epsilon |x|^2)}\Big[ \delta_{ij}-\frac{2\epsilon x^ix^j}{(1+\epsilon |x|^2)} \Big].
				\end{equation}
				Employing \eqref{eqnA2} in \eqref{eqnn 4.3.50}  yields,
				\begin{equation}\label{eqnA3}
					s_{ij}=0,~ r_{ij}=b_{i|j}=\frac{(1+\epsilon r^2)}{(r^2-|x|^2)(1+\epsilon |x|^2)}\Big[ \delta_{ij}-\frac{2\epsilon x^ix^j}{(1+\epsilon |x|^2)} \Big] ,~\forall i,j=1,2.
				\end{equation}
				Applying  \eqref{eqnA3} in \eqref{eqnn 4.3.51} and \eqref{eqnn 4.3.52}, we have
				\begin{equation}\label{eqnA4}
					s^i_j:=0,~ s_j:=0, ~ \mbox{and}~~ e_{ij}=r_{ij}=\frac{(1+\epsilon r^2)\left[ (1+\epsilon |x|^2)\delta_{ij}-2\epsilon x^ix^j\right] }{(r^2-|x|^2)(1+\epsilon |x|^2)^2}.
				\end{equation}
				Let $G^i=G^i(x,\xi)$ and $\bar{G}^i=\bar{G}^i(x,\xi)$ denote the spray coefficients of $\mathcal{F}_\epsilon$ and $\alpha$ respectively. Then  $G^i$ and $\bar{G}^i$ are related by (see \cite[\S 2.3, equation (2.19)]{CXSZ})
				\begin{equation}\label{eqnA5}
					G^i=\bar{G}^i+P \xi^i+Q^i,
				\end{equation}  
				where
				\begin{equation}\label{eqnA6}
					P:=\frac{e_{00}}{2\mathcal{F}_\epsilon}-s_0, ~~ ~~  Q^i:=\alpha s^i_0  ~~ \mbox{and}~~ \bar{G}^i=\frac{1}{2}\bar{\Gamma}^i_{jk}\xi^j\xi^k,
				\end{equation}
				and  where $e_{00}:=e_{ij}\xi^i\xi^j$, $s_0:=s_i\xi^i$ and $s^i_0:=s_j^i \xi^j$.\\
				Therefore from \eqref{eqn0} , \eqref{eqnA4} and \eqref{eqnA6} we find  that,
				\begin{equation}\label{eqnA7}
					P=\frac{e_{00}}{2\mathcal{F}_\epsilon}=\frac{r_{ij}\xi^i\xi^j}{2\mathcal{F}_\epsilon}= \frac{(1+\epsilon r^2)\left[ (1+\epsilon |x|^2)|\xi|^2-2\epsilon \langle x , \xi \rangle^2\right] }{2\mathcal{F}_\epsilon (r^2-|x|^2)(1+\epsilon |x|^2)^2},
				\end{equation}
				\begin{equation}\label{eqnA8}
					Q^i=0 ~ \mbox{and}~~ \bar{G}^i=\frac{\xi^i\langle x , \xi \rangle}{(r^2-|x|^2)},~  i=1,2.
				\end{equation}
				Employing \eqref{eqnA7}, \eqref{eqnA8} in  \eqref{eqnA5}, we see that the spray coefficients are given by
				\begin{equation}\label{eqnA9}
					G^i=\frac{\xi^i}{(r^2-|x|^2)}\left(\langle x , \xi \rangle+\frac{(1+\epsilon r^2)\left[ (1+\epsilon |x|^2)|\xi|^2-2\epsilon \langle x , \xi \rangle^2\right]}{2\mathcal{F}(1+\epsilon |x|^2)^2} \right). 
				\end{equation}
				Therefore, the  Funk-Finsler metric $\mathcal{F}_\epsilon$ is projectively flat (see Proposition \ref{ppn2.3}).\\
				From \eqref{eqn20} and \eqref{eqnA0} for $\mathcal{F}_\epsilon$ we have 
				\begin{equation}\label{rho}
					\rho=\frac{1}{2}\log\frac{\left(r^2- |x|^2 \right)\left(1-\epsilon r^2|x|^2 \right)}{r^2\left(1+\epsilon |x|^2 \right)^2}.
				\end{equation}
				Therefore,
				\begin{equation}\label{rho0}
					\rho_0=\rho_{x^i}\xi^i=-\frac{\left( 1+\epsilon r^2\right)^2\left(1-\epsilon |x|^2 \right)\langle x , \xi \rangle}{\left(r^2- |x|^2 \right)\left(1-\epsilon^2 r^2 |x|^2 \right)\left(1+\epsilon |x|^2 \right)}.
				\end{equation}
				Availing \eqref{rho0} and \eqref{eqnA7} in \eqref{eqnn 4.65} for $n=2$, we obtain $S$-curvature as:
				\begin{equation*}
					\textbf{S}_\epsilon(x,\xi)= \frac{3(1+\epsilon r^2)\left[ (1+\epsilon |x|^2)|\xi|^2-2\epsilon \langle x , \xi \rangle^2\right] }{2\mathcal{F}_\epsilon(x,\xi) (r^2-|x|^2)(1+\epsilon |x|^2)^2}+\frac{3\langle x , \xi \rangle (1+\epsilon r^2)^2(1-\epsilon |x|^2)}{(r^2-|x|^2)(1-\epsilon^2 r^2|x|^2)(1+\epsilon |x|^2)} .
				\end{equation*}
			\end{proof}
			
			\subsection{Proof of Theorem \ref{thm1.3}:}\label{sb30}
			\begin{proof}
				From Theorem \ref{thm4.1} we have
				\begin{eqnarray}
					\nonumber    S_\epsilon-\frac{3}{2}\mathcal{F}_{\epsilon}&=& \frac{3(1+\epsilon r^2)\left[ (1+\epsilon |x|^2)|\xi|^2-2\epsilon \langle x , \xi \rangle^2\right] }{2\mathcal{F}_\epsilon(x,\xi) (r^2-|x|^2)(1+\epsilon |x|^2)^2}\\
					\nonumber &&+\frac{3\langle x , \xi \rangle (1+\epsilon r^2)^2(1-\epsilon |x|^2)}{(r^2-|x|^2)(1-\epsilon^2 r^2|x|^2)(1+\epsilon |x|^2)}-\frac{3}{2}\mathcal{F}_\epsilon.
				\end{eqnarray}
				Therefore,
				\begin{eqnarray}
					\nonumber  S_\epsilon-\frac{3}{2}\mathcal{F}_{\epsilon} &=&\frac{3(1+\epsilon r^2)\left[ (1+\epsilon |x|^2)|\xi|^2-2\epsilon \langle x , \xi \rangle^2\right] }{2(r^2-|x|^2)(1+\epsilon |x|^2)^2\times \frac{\Big[\left(  1+\epsilon |x|^2\right)\sqrt{(r^2-|x|^2)|\xi|^2 +\langle x , \xi \rangle^2} +\left(  1+\epsilon r^2\right)\langle x , \xi \rangle\Big]}{(r^2-|x|^2)(1+\epsilon |x|^2)}}\\
					\nonumber &&+\frac{3\langle x , \xi \rangle (1+\epsilon r^2)^2(1-\epsilon |x|^2)}{(r^2-|x|^2)(1-\epsilon^2 r^2|x|^2)(1+\epsilon |x|^2)}\\
					\nonumber &&-\frac{3}{2}\frac{\Big[\left(  1+\epsilon |x|^2\right)\sqrt{(r^2-|x|^2)|\xi|^2 +\langle x , \xi \rangle^2} +\left(  1+\epsilon r^2\right)\langle x , \xi \rangle\Big]}{(r^2-|x|^2)(1+\epsilon |x|^2)}.
				\end{eqnarray}
				After simplification we obtain
				\begin{eqnarray}
					\nonumber  S_\epsilon-\frac{3}{2}\mathcal{F}_{\epsilon} &=&    \frac{\epsilon}{\left(  1+\epsilon |x|^2\right)\left(  1-\epsilon^2 r^2|x|^2\right)\Big[\left(  1+\epsilon |x|^2\right)\sqrt{(r^2-|x|^2)|\xi|^2 +\langle x , \xi \rangle^2} +\left(  1+\epsilon r^2\right)\langle x , \xi \rangle\Big]}\\
					\nonumber  &&\times \Bigg[\Big[\left(  1+\epsilon |x|^2\right)\sqrt{(r^2-|x|^2)|\xi|^2 +\langle x , \xi \rangle^2} +\left(  1+\epsilon r^2\right)\langle x , \xi \rangle\Big]^2 \\
					\label{S} &&~~~~~~~~~~~~~~~~~~+ \epsilon \left(r^2- |x|^2\right)\left(  1+\epsilon r^2\right)\left(  1+\epsilon |x|^2\right)\Big[\langle x , \xi \rangle^2  -|x|^2|\xi|^2\Big] \Bigg].
				\end{eqnarray}
				Clearly,  for the Euclidean Funk disc, $\epsilon=0$ and $|x| < r$. Therefore from \eqref{S} we have
				$$S_0-\frac{3}{2}\mathcal{F}_{0}=0.$$
				For the hyperbolic Klein Funk disc $\epsilon=-1$ and $|x|< r \leq 1$. Then from \eqref{S} we yield
				$$S_{-1}-\frac{3}{2}\mathcal{F}_{-1} <0,~~i.e.,~~S_{-1}< \frac{3}{2}\mathcal{F}_{-1}.$$
				For the spherical Funk disc $\epsilon=1$ and $|x|< r \leq 
				1$. Therefore from \eqref{S} we obtain
				$$S_1-\frac{3}{2}\mathcal{F}_{1} > 0 ~~i.e.,~~S_{1} > \frac{3}{2}\mathcal{F}_{1}.$$
			\end{proof}
			
		\begin{proposition}
			The Funk-Finsler metric $\mathcal{F}_\epsilon$ on the unit disc $\mathbb{D}_E(r)$, given in Theorem \ref{thm1.1}, is a non-Berwaldian Douglas metric.
		\end{proposition}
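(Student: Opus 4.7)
The plan is to invoke the two structural results for Randers metrics recalled in Section \ref{sec2}, namely Propositions \ref{ppn2.1} and \ref{ppn2.2}, and then verify the two required conditions directly from data already computed in the proof of Theorem \ref{thm4.1}.

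First I would dispatch the Douglas part. By Proposition \ref{ppn2.2}, a Randers metric $F=\alpha+\beta$ is Douglas if and only if $\beta$ is closed. For the Funk-Finsler metric $\mathcal{F}_\epsilon$, this was already observed just after equation \eqref{eqn20}, where we exhibited the explicit potential
\[
\beta = df, \qquad f(x)=\tfrac{1}{2}\log\!\left(\tfrac{1+\epsilon|x|^2}{r^2-|x|^2}\right),
\]
so $d\beta=0$ and hence $\mathcal{F}_\epsilon$ is a Douglas metric for each $\epsilon\in\{-1,0,1\}$.

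Next I would show it is not Berwaldian. By Proposition \ref{ppn2.1}, a Randers metric $\alpha+\beta$ is Berwald if and only if $\beta$ is parallel with respect to $\alpha$, i.e.\ $b_{i|j}\equiv 0$. But the covariant derivative was already computed in equation \eqref{eqnA2}, giving
\[
b_{i|j} \;=\; \frac{(1+\epsilon r^2)}{(r^2-|x|^2)(1+\epsilon|x|^2)}\Big[\delta_{ij}-\frac{2\epsilon x^i x^j}{1+\epsilon|x|^2}\Big].
\]
Evaluating at the origin $x=0$ gives $b_{i|j}(0)=\frac{1+\epsilon r^2}{r^2}\,\delta_{ij}$, which is nonzero since $1+\epsilon r^2>0$ for $\epsilon\in\{-1,0,1\}$ under the admissible range $0<r\le r_\epsilon$ (with $r<1$ when $\epsilon=-1$). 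Hence $\beta$ is not $\alpha$-parallel, so $\mathcal{F}_\epsilon$ fails to be a Berwald metric.

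The two observations together yield the claim. There is essentially no hard step here: the proposition is really a bookkeeping consequence of Propositions \ref{ppn2.1}, \ref{ppn2.2} combined with the explicit formulas already derived, and the only point that requires a moment's attention is checking that the prefactor $1+\epsilon r^2$ does not vanish in the admissible range of the parameter $\epsilon$, which is immediate.
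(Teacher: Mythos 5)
Your proposal is correct and follows essentially the same route as the paper: both parts reduce to Propositions \ref{ppn2.1} and \ref{ppn2.2} together with the explicit covariant derivative $b_{i|j}$ from \eqref{eqnA2} (the paper certifies Douglas via $s_{ij}=0$, which is equivalent to your closedness observation $\beta=df$). The only cosmetic difference is in showing $b_{i|j}\not\equiv 0$: you evaluate at the origin, while the paper takes the trace of the bracket $\delta_{ij}-\frac{2\epsilon x^i x^j}{1+\epsilon|x|^2}$ to reach a contradiction; your explicit remark that $1+\epsilon r^2\neq 0$ in the admissible range is a worthwhile precision that the paper leaves implicit.
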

		
		\begin{proof}
			It is known that a Randers metric $F=\alpha+\beta$ is Berwald if and only if $\beta$ is parallel with respect to $\alpha$, i.e., $b_{i|j}=0$ (see Proposition \ref{ppn2.1}) and $F$ is Douglas if and only if $b_{i|j}=b_{j|i}$ (or $s_{ij}=0$, see Proposition  \ref{ppn2.2}).
			In view of \eqref{eqnA3} clearly  $\mathcal{F}_\epsilon$ is Douglas.\\
			If possible let $\mathcal{F}_\epsilon$ is Berwald, i.e., $b_{i|j}=0$. Since $|x|^2 < r \leq 1$, we have
			\begin{equation*}
				\delta_{ij}-\frac{2\epsilon x^i x^j}{1+\epsilon |x|^2}=0.
			\end{equation*}
			Put $i=j$ and summing over $i$, we get a contradiction, and hence $\mathcal{F}_\epsilon$ is a non-Berwaldian Douglas metric.
		\end{proof}
		\begin{thm}
			Let  $\mathcal{F}_\epsilon$ be the Funk metric on the disc $\mathbb{D}_E(r)$, given in Theorem \ref{thm1.1}. Then the Riemann curvature $R^i_k$, the Ricci curvature  $\textbf{Ric}$, and the flag curvature of  $\mathcal{F}_\epsilon$ are given by
			\begin{equation}
				R^i_k=- \left(  \delta^i_k \alpha^2-\alpha \alpha_k\xi^i \right)+\Bigg[ 3\left( \frac{\phi}{2\mathcal{F}_\epsilon}\right)^2  -\frac{\psi}{2\mathcal{F}_\epsilon}\Bigg]\left( \delta^i_k-\frac{(\mathcal{F}_\epsilon)_{\xi^k}}{\mathcal{F}_\epsilon}\xi^i\right) +\tau_k \xi^i,
			\end{equation}
			and 
			\begin{equation}\label{eqn 4.4.2A}
				\textbf{Ric}= \textbf{K} \mathcal{F}_\epsilon^2=\Bigg[3\left( \frac{\phi}{2\mathcal{F}_\epsilon}\right)^2  -\frac{\psi}{2\mathcal{F}_\epsilon} \Bigg]-\alpha^2,
			\end{equation} 
			where
			$$ \psi=\frac{-2(1+\epsilon r^2)\langle x , \xi \rangle}{(1+\epsilon |x|^2)^3(r^2-|x|^2)^2}\Bigg[ (1+\epsilon |x|^2)|\xi|^2(3\epsilon r^2-2\epsilon |x|^2+1)-2\epsilon  \langle x ,\xi \rangle^2(1-\epsilon |x|^2+2\epsilon r^2) \Bigg],$$

			$$\phi=\frac{(1+\epsilon r^2)\left[ (1+\epsilon |x|^2)|\xi|^2-2\epsilon \langle x , \xi \rangle^2\right] }{ (r^2-|x|^2)(1+\epsilon |x|^2)^2},$$
			and 
			\begin{equation*}
				\tau_k=\frac{(1+\epsilon r^2)(x^k |\xi|^2-\xi^k \langle x , \xi \rangle)}{\mathcal{F} (r^2-|x|^2)^2(1+\epsilon |x|^2)}.
			\end{equation*}
		\end{thm}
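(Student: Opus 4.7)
The plan is to leverage the projective flatness of $\mathcal{F}_\epsilon$ established in Theorem \ref{thm4.1}: by (\ref{eqnA9}), the spray coefficients take the form $G^i = \tilde P\,\xi^i$ with $\tilde P = Q + P$, where $Q := \langle x,\xi\rangle/(r^2-|x|^2)$ is the projective factor of the Riemannian background $\alpha$ and $P := \phi/(2\mathcal{F}_\epsilon)$ is the extra projective factor supplied by the closed $1$-form $\beta$. Both are positively $1$-homogeneous in $\xi$, so Euler's identities $\xi^j\tilde P_{\xi^j} = \tilde P$ and $\xi^j\tilde P_{\xi^j\xi^k} = 0$ apply. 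Substituting $G^i = \tilde P\,\xi^i$ into (\ref{eqn2.2.10}) and simplifying with these identities, the four-term definition of $R^i_k$ collapses to
\begin{equation*}
R^i_k \;=\; (\tilde P^{\,2} - \tilde P_{;0})\,\delta^i_k \;+\; \bigl(3\tilde P_{x^k} - (\tilde P_{;0})_{\xi^k} - \tilde P\,\tilde P_{\xi^k}\bigr)\,\xi^i,\qquad \tilde P_{;0} := \xi^j\tilde P_{x^j}.
\end{equation*}
Contracting with $\xi^k$ confirms $R^i_k\xi^k = 0$, and since in dimension two every rank-one endomorphism annihilating $\xi$ takes the scalar-flag-curvature shape, this forces $R^i_k = \mathbf{K}\bigl(\mathcal{F}_\epsilon^{\,2}\delta^i_k - \mathcal{F}_\epsilon (\mathcal{F}_\epsilon)_{\xi^k}\xi^i\bigr)$ with $\mathbf{K} = (\tilde P^{\,2} - \tilde P_{;0})/\mathcal{F}_\epsilon^{\,2}$ (Berwald's formula for projectively flat metrics).

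Next, I would decompose $\tilde P = Q + P$ throughout. A short calculation with $Q = \langle x,\xi\rangle/(r^2-|x|^2)$ yields $Q^2 - Q_{;0} = -\alpha^2$ and $3Q_{x^k} - (Q_{;0})_{\xi^k} - Q\,Q_{\xi^k} = \alpha\alpha_{\xi^k}$, which together recover the Riemannian piece $\bar R^i_k = -(\alpha^2\delta^i_k - \alpha\alpha_{\xi^k}\xi^i)$, consistent with $\alpha$ being isometric (via $y = x/r$) to the Klein model of hyperbolic curvature $-1$. The cross and $P$-only contributions then give $\tilde P^{\,2} - \tilde P_{;0} = -\alpha^2 + (2QP + P^{\,2} - P_{;0})$, and one \emph{defines} $\psi$ by $\psi/(2\mathcal{F}_\epsilon) := 2P^{\,2} - 2QP + P_{;0}$ so this reads $-\alpha^2 + [\,3P^{\,2} - \psi/(2\mathcal{F}_\epsilon)\,]$. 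Evaluating $P_{;0}$ via the quotient rule from $P = \phi/(2\mathcal{F}_\epsilon)$ and reducing with (\ref{eqn2}) and (\ref{eq 3.2.1}) produces exactly the stated closed-form expression for $\psi$. The $\xi^i$-coefficient, which scalar flag curvature forces to equal $-\mathbf{K}\mathcal{F}_\epsilon(\mathcal{F}_\epsilon)_{\xi^k}$, can then be reorganized as $\alpha\alpha_{\xi^k} - [\,3P^{\,2} - \psi/(2\mathcal{F}_\epsilon)\,](\mathcal{F}_\epsilon)_{\xi^k}/\mathcal{F}_\epsilon + \tau_k$, where $\tau_k := \alpha^2(\mathcal{F}_\epsilon)_{\xi^k}/\mathcal{F}_\epsilon - \alpha\alpha_{\xi^k} = \alpha(\alpha b_k - \beta\alpha_{\xi^k})/\mathcal{F}_\epsilon$; direct substitution of $\alpha$, $\beta$, and $b_k$ from Theorem \ref{thm1.1} reduces this to the stated expression for $\tau_k$.

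Finally, the Ricci curvature is obtained by tracing on $k = i$: using $\delta^i_i = 2$, $(\mathcal{F}_\epsilon)_{\xi^i}\xi^i = \mathcal{F}_\epsilon$, $\alpha\alpha_{\xi^i}\xi^i = \alpha^2$, and the direct verification $\tau_i\xi^i = 0$ (which follows from the antisymmetric structure $x^i|\xi|^2 - \xi^i\langle x,\xi\rangle$), one obtains $\mathrm{Ric} = 3P^{\,2} - \psi/(2\mathcal{F}_\epsilon) - \alpha^2$, whence $\mathbf{K} = \mathrm{Ric}/\mathcal{F}_\epsilon^{\,2}$. The principal obstacle is the algebraic bookkeeping in the middle step: expressing $P_{;0}$ requires the quotient rule applied to $\phi/(2\mathcal{F}_\epsilon)$ followed by a careful combination with the cross-term $2QP$ to recognize the precise polynomial numerator in $\psi$. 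Throughout, the identity $R^i_k\xi^k = 0$ serves as the consistency check that couples the $\delta^i_k$-coefficient (which yields Ricci and flag curvature) to the $\xi^i$-coefficient (which pins down $\tau_k$).
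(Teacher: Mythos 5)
Your proposal is correct in outline and reaches the same formulas, but by a genuinely different route from the paper. The paper quotes the Cheng--Shen curvature formula for a Randers metric with closed $\beta$ (their \S 5.2, eqs.\ (5.10), (5.12)), in which $\phi$, $\psi$, $\tau_k$ are \emph{defined} as $b_{i|j}\xi^i\xi^j$, $b_{i|j|k}\xi^i\xi^j\xi^k$ and $\tfrac{1}{F}(b_{i|j|k}-b_{i|k|j})\xi^i\xi^j$; it then computes the covariant derivatives $b_{i|j}$, $b_{i|j|k}$ of the $1$-form with respect to $\alpha$ explicitly and substitutes, using $\overline{R^i_k}=-(\alpha^2\delta^i_k-\alpha\alpha_{\xi^k}\xi^i)$ from the constant curvature $-1$ of $\alpha$. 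You instead exploit the projective form $G^i=\tilde P\xi^i$ already obtained in \eqref{eqnA9} and apply Berwald's formula $R^i_k=(\tilde P^2-\tilde P_{;0})\delta^i_k+\tau_k\xi^i$, splitting $\tilde P=Q+P$; your identities $Q^2-Q_{;0}=-\alpha^2$, $3Q_{x^k}-(Q_{;0})_{\xi^k}-QQ_{\xi^k}=\alpha\alpha_{\xi^k}$ and $\tau_k=\alpha^2(\mathcal{F}_\epsilon)_{\xi^k}/\mathcal{F}_\epsilon-\alpha\alpha_{\xi^k}=\alpha(\alpha b_k-\beta\alpha_{\xi^k})/\mathcal{F}_\epsilon$ all check out and do reproduce the stated $\tau_k$. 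Your route needs only first-order data of the projective factor (no third-order tensor $b_{i|j|k}$) and makes the scalar-flag-curvature structure of $R^i_k$ transparent; the paper's route is the one that intrinsically identifies $\psi$ as $b_{i|j|k}\xi^i\xi^j\xi^k$ and works without projective flatness. Two caveats on your write-up: the assertion that $R^i_k\xi^k=0$ in dimension two ``forces'' the scalar-curvature shape is not a rank argument --- it requires the $g_\xi$-self-adjointness of $R_\xi$, so you should simply cite the standard fact that a projectively flat metric has scalar flag curvature $\mathbf{K}=(\tilde P^2-\tilde P_{;0})/\mathcal{F}_\epsilon^2$; and since you \emph{define} $\psi/(2\mathcal{F}_\epsilon):=2P^2-2QP+P_{;0}$, the claim that this equals the displayed polynomial expression (equivalently $b_{i|j|k}\xi^i\xi^j\xi^k$) still has to be verified by the deferred quotient-rule computation --- though that is no worse than the ``after simplification'' steps the paper itself leaves to the reader.
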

		\begin{proof} The Riemannian curvature of the Randers metric $F=\alpha+\beta$ with a closed $1$-form on an $n$-dimensional manifold is a given by (see \cite[$\S 5.2$, equation $(5.10)$ and equation $(5.12)$]{CXSZ})
			\begin{equation}\label{eqnn 4.3.70}
				R^i_k=\overline{R^i_k}+\Bigg[ 3\left( \frac{\phi}{2F}\right)^2  -\frac{\psi}{2F}\Bigg]\left( \delta^i_k-\frac{F_{\xi^k}}{F}\xi^i\right) +\tau_k \xi^i,
			\end{equation}
			where
			\begin{equation}\label{eqnn 4.3.72}
				\phi:=b_{i|j}\xi^i\xi^j,~~~\psi:=b_{i|j|k}\xi^i\xi^j\xi^k,~~~\tau_k:=\frac{1}{F}\left( b_{i|j|k}-b_{i|k|j}\right) \xi^i\xi^j,
			\end{equation}
			and 
			\begin{equation}\label{eqnn 4.3.720}
				b_{i|j|k}=\frac{\partial b_{i|j}}{\partial x^k}-b_{i|m}\bar{\Gamma}^m_{jk}-b_{j|m}\bar{\Gamma}^m_{ik}.   
			\end{equation}
			Here $\overline{R^i_k}$   denotes the Riemann curvature of the Riemannian metric $\alpha$.\\
			Further,  the Ricci curvature of the Randers metric is given by
			\begin{equation}\label{eqnn 4.3.71}
				\textbf{Ric} =\overline{\textbf{Ric}} +(n-1)\Bigg[3\left( \frac{\phi}{2F}\right)^2  -\frac{\psi}{2F} \Bigg],
			\end{equation} 
			where $\overline{\textbf{Ric}}$   denotes the Ricci curvature of the Riemannian metric $\alpha$.\\
			It is well known that the Gaussian curvature of the Riemannian metric $\alpha=\frac{\sqrt{\left(r^2-|x|^2 \right) |\xi|^2+ \langle x, \xi \rangle ^2 }}{r^2-|x|^2}$   is $-1$. Therefore, from \eqref{eqn2.1.12} the Riemann curvature of the  metric is given by 
			\begin{equation}\label{eqnn 4.3.710}
				\overline{R^i_k}=- \left(  \delta^i_k \alpha^2-\alpha \alpha_k\xi^i \right),~ \alpha_k:=\frac{\partial \alpha}{\partial \xi ^k}.
			\end{equation} 
			Considering \eqref{eqnn 4.3.70} with \eqref{eqnn 4.3.710}, we get the desired expression for the Riemann curvature.\\
			In \eqref{eqnn 4.3.710}, use $i=k$ and then sum over $i$ for  $i =1,2$, we have,
			\begin{equation}\label{eqnn 4.3.711}
				\overline{R^i_i}=-\alpha^2.
			\end{equation} 
			In view of \eqref{eqnA3}, \eqref{eqnn 4.3.720} for the Funk metric $\mathcal{F}_\epsilon$, given in Theorem \ref{thm1.1}, the functions $\phi$ and $\psi$ can be explicitly calculated as follows:
			\begin{eqnarray}
				\label{eqnn 4.3.74} \phi:=b_{i|j}\xi^i\xi^j=\frac{(1+\epsilon r^2)\left[ (1+\epsilon |x|^2)|\xi|^2-2\epsilon \langle x , \xi \rangle^2\right] }{ (r^2-|x|^2)(1+\epsilon |x|^2)^2},
			\end{eqnarray}
			and
			\begin{eqnarray}
				\nonumber \psi&=&b_{i|j|k}\xi^i\xi^j\xi^k=\left(\frac{\partial b_{i|j}}{\partial x^k}-b_{i|m}\bar{\Gamma}^m_{jk}-b_{j|m}\bar{\Gamma}^m_{ik}\right)\xi^i\xi^j\xi^k\\
				\nonumber &=&\frac{- 2(1+\epsilon r^2)\langle x , \xi \rangle}{(1+\epsilon |x|^2)^3(r^2-|x|^2)^2}\Bigg[ (1+|x|^2)|\xi|^2(3\epsilon r^2-2\epsilon |x|^2+1)\\
				\label{eqnn 4.3.75} &~&~~~~~~~~~~~~~~~~~~~~~~~~~~~~~~~~~~~~~~~~~~~~~-2 \epsilon \langle x ,\xi \rangle^2(1-\epsilon |x|^2+2\epsilon r^2) \Bigg].
			\end{eqnarray}
			Using \eqref{eqnA3} and \eqref{eqnn 4.3.720} in \eqref{eqnn 4.3.72}, we obtain
			\begin{eqnarray}
				\label{eqnn 4.3.76} \tau_k=\frac{(1+\epsilon r^2)(x^k |\xi|^2-\xi^k \langle x , \xi \rangle)}{\mathcal{F}_\epsilon (r^2-|x|^2)^2(1+\epsilon |x|^2)}.
			\end{eqnarray}
			Clearly, 
			\begin{equation}\label{eqnn 4.3.77}
				\tau_k\xi^k=0.
			\end{equation}
			Consequently, substituting \eqref{eqnn 4.3.711},  \eqref{eqnn 4.3.77} in \eqref{eqnn 4.3.70},
			putting, $i=k$ and using over $i$ for  $i =1,2$, we obtain:
			\begin{equation}\label{eqnn 4.3.712}
				R^i_i=-\alpha^2+\Bigg[ 3\left( \frac{\phi}{2\mathcal{F}_\epsilon}\right)^2  -\frac{\psi}{2\mathcal{F}_\epsilon}\Bigg],
			\end{equation}
			where $\phi$ , $\psi$ respectively, is given by \eqref{eqnn 4.3.74}, \eqref{eqnn 4.3.75}. \\
			Thus the Ricci curvature for  $\mathcal{F}_\epsilon$ is:
			\begin{eqnarray}
				\nonumber \textbf{Ric}=R^i_i =\Bigg[3\left( \frac{\phi}{2\mathcal{F}_\epsilon}\right)^2  -\frac{\psi}{2\mathcal{F}_\epsilon} \Bigg]-\alpha^2.
			\end{eqnarray}
			For dimension $2$,  $\textbf{K}=\frac{\textbf{Ric}}{\mathcal{F}_\epsilon^2}$, we have the desired result.
		\end{proof}
		
		\subsection{Proof of Theorem \ref{thm1.2}}\label{sb4}
		\begin{proof} 
			From \eqref{eqn 4.4.2A}, we have $$\textbf{K}=\frac{\textbf{Ric}}{\mathcal{F}_\epsilon^2}=\frac{\Big[3\left( \frac{\phi}{2\mathcal{F}_\epsilon}\right)^2  -\frac{\psi}{2\mathcal{F}_\epsilon} \Big]-\alpha^2}{\mathcal{F}_\epsilon^2}.$$
			Employing  $\mathcal{F}_\epsilon$, given in Theorem \ref{thm1.1}, \eqref{eqnn 4.3.74} and \eqref{eqnn 4.3.75} we obtain 
			\begin{eqnarray}
				\nonumber   \textbf{K}&=&\frac{3}{4}\frac{\phi^2}{\mathcal{F}_\epsilon^4}-\frac{\psi}{2\mathcal{F}_\epsilon^3}-\frac{\alpha^2}{\mathcal{F}_\epsilon^2}\\
				\nonumber  &=&\frac{3}{4}\frac{(r^2-|x|^2)^2(1+\epsilon r^2)^2\left[ (1+\epsilon |x|^2)|\xi|^2-2\epsilon \langle x , \xi \rangle^2\right]^2 }{\Big[\left(  1+\epsilon |x|^2\right)\sqrt{(r^2-|x|^2)|\xi|^2 +\langle x , \xi \rangle^2} +\left(  1+\epsilon r^2\right)\langle x , \xi \rangle\Big]^4}\\
				\nonumber &&+ \frac{\left(  1+\epsilon r^2\right) \left(r^2-|x|^2\right)\langle x , \xi \rangle \Big[(1+|x|^2)|\xi|^2(3\epsilon r^2-2\epsilon |x|^2+1)-2 \epsilon \langle x ,\xi \rangle^2(1-\epsilon |x|^2+2\epsilon r^2)\Big]}{\Big[\left(  1+\epsilon |x|^2\right)\sqrt{(r^2-|x|^2)|\xi|^2 +\langle x , \xi \rangle^2} +\left(  1+\epsilon r^2\right)\langle x , \xi \rangle\Big]^3}\\
				\nonumber && - \frac{\left(  1+\epsilon |x|^2\right)^2 \left((r^2-|x|^2)|\xi|^2 +\langle x , \xi \rangle^2 \right)}{\Big[\left(  1+\epsilon |x|^2\right)\sqrt{(r^2-|x|^2)|\xi|^2 +\langle x , \xi \rangle^2} +\left(  1+\epsilon r^2\right)\langle x , \xi \rangle\Big]^2}.
			\end{eqnarray}
			Further,
			
			\begin{eqnarray}
				\nonumber \textbf{K}+\frac{1}{4} 
				&=&\frac{1}{4\Big[\left(  1+\epsilon |x|^2\right)\sqrt{(r^2-|x|^2)|\xi|^2 +\langle x , \xi \rangle^2} +\left(  1+\epsilon r^2\right)\langle x , \xi \rangle\Big]^4} \times\\
				\nonumber &&\Bigg[3(r^2-|x|^2)^2(1+\epsilon r^2)^2\left[ (1+\epsilon |x|^2)|\xi|^2-2\epsilon \langle x , \xi \rangle^2\right]^2\\
				\nonumber  &&+ \left(  1+\epsilon r^2\right) \left(r^2-|x|^2\right)\langle x , \xi \rangle \Big[ \left(  1+\epsilon |x|^2\right)\sqrt{(r^2-|x|^2)|\xi|^2 +\langle x , \xi \rangle^2} +\left(  1+\epsilon r^2\right)\langle x , \xi \rangle\Big] \\
				\nonumber &&~~~~~~~~~~~~~~~~~~~~\times \Big[(1+|x|^2)|\xi|^2(3\epsilon r^2-2\epsilon |x|^2+1)-2 \epsilon \langle x ,\xi \rangle^2(1-\epsilon |x|^2+2\epsilon r^2)\Big]
				\\
				\nonumber &&-\Big[ \left(  1+\epsilon |x|^2\right)^2 \left((r^2-|x|^2)|\xi|^2 +\langle x , \xi \rangle^2 \right)\Big]\\
				\nonumber &&~~~~~~~~~~~~~~~~~~~~~\times \Big[\left(  1+\epsilon |x|^2\right)\sqrt{(r^2-|x|^2)|\xi|^2 +\langle x , \xi \rangle^2} +\left(  1+\epsilon r^2\right)\langle x , \xi \rangle\Big]^2\\
				\nonumber&&+ \Big[\left(  1+\epsilon |x|^2\right)\sqrt{(r^2-|x|^2)|\xi|^2 +\langle x , \xi \rangle^2} +\left(  1+\epsilon r^2\right)\langle x , \xi \rangle\Big]^4 \Bigg].
			\end{eqnarray}
			After simplifying we get
			\begin{eqnarray}
				\nonumber  \textbf{K}+\frac{1}{4} &=& \frac{3 \epsilon \left(r^2-|x|^2\right)^2\Big[ 1+\epsilon \left(|x|^2|\xi|^2 - \langle x , \xi \rangle^2 \right)\Big]}{4 \Big[\left(  1+\epsilon |x|^2\right)\sqrt{(r^2-|x|^2)|\xi|^2 +\langle x , \xi \rangle^2} +\left(  1+\epsilon r^2\right)\langle x , \xi \rangle\Big]^4}  \\
				\nonumber &&~~\times \Bigg[ \left(  1+\epsilon r^2\right)\left(  1+\epsilon |x|^2\right)\left(\sqrt{(r^2-|x|^2)|\xi|^2 +\langle x , \xi \rangle^2} +\langle x , \xi \rangle \right)^2\\
				\label{K}&&~~~~~~~+\left(\left(  1+\epsilon |x|^2\right)\sqrt{(r^2-|x|^2)|\xi|^2 +\langle x , \xi \rangle^2} +\left(  1+\epsilon r^2\right)\langle x , \xi \rangle \right)^2 \Bigg].
			\end{eqnarray}
			Clearly,  for the Euclidean Funk disc $\epsilon=0$ and $|x| < r$. Therefore from \eqref{K} we have
			\begin{equation*}
				\textbf{K} = -\frac{1}{4}.
			\end{equation*}
			For the hyperbolic Funk disc $\epsilon=-1$ and $|x|< r \leq 1$. Then from \eqref{K} we yield
			\begin{eqnarray}
				\nonumber   \textbf{K} +\frac{1}{4} < 0,~~~ \mbox{i.e.,} ~~~~\textbf{K} < -\frac{1}{4}.
			\end{eqnarray}
			Further, for the spherical Funk disc $\epsilon=1$ and $|x|< r \leq 
			1$. Therefore from \eqref{K} we obtain
			\begin{eqnarray}
				\nonumber  \textbf{K} +\frac{1}{4} > 0,~~~ \mbox{i.e.,} ~~~~   \textbf{K} > -\frac{1}{4}.
			\end{eqnarray}
		\end{proof}
		\section{Zermelo Navigation}\label{sec5}
		It is well known that any Randers metric on a manifold $M$ has a Zermelo navigation representation. For instant, if $F=\alpha+\beta$ is given the Randers metric with $\alpha=\sqrt{a_{ij}(x)\xi^i\xi^j}$ and differential $1$-form $\beta=b_i(x)\xi^i$, satisfying $||\beta||^2_\alpha=a^{ij}b_ib_j < 1$. Then the Zermelo Navigation for this Randers metric is the triple $(M,h,W)$, where  $h=\sqrt{h_{ij}\xi^i \xi^j}$ with
		\begin{equation*}
			h_{ij}= c (a_{ij}-b_ib_j), ~~ W^i=-\frac{b^i}{c},~ b^i=a^{ij}b_j~ \mbox{and}~ c=1-||\beta||^2_\alpha.
		\end{equation*}
		Moreover, $||W||_h=||\beta||_{\alpha}$.\\
		Also given the Zermelo data, we can get back the Randers metric. 
		And this 1-1 correspondence is useful in finding the geodesics of the Randers metric. % and other geometry in particular.
		See for more details  \cite[ Example $1.4.3$]{SSZ}.\\\\
		In this subsection, we obtain the Zermelo data for the Funk-Finsler metric $\mathcal{F}_\epsilon$, which is a trivially a Randers metric.
		We have,
		\begin{equation*}
			\mathcal{F}_\epsilon(x, \xi) = \frac{\sqrt{\left(r^2-|x|^2 \right) |\xi|^2+ \langle x, \xi \rangle ^2 }}{r^2-|x|^2}+\frac{(1+\epsilon r^2) \langle x,\xi \rangle}{(r^2-|x|^2)(1+\epsilon|x|^2)}.
		\end{equation*}
		We need to find  $h_{ij}, W^i$ defined above. From 
		\eqref{eqn20}  we have,  
		\begin{equation}
			c=  1- ||\beta_F||^2_{\alpha_F}=1-\frac{|x|^2(1+\epsilon r^2)^2}{r^2(1+\epsilon |x|^2)^2} =\frac{(r^2-|x|^2)(1-\epsilon^2 r^2|x|^2)}{r^2(1+\epsilon |x|^2)^2}.  
		\end{equation}
		Employing \eqref{eqn2.5.118} and \eqref{eqn2}, we see  
		\begin{equation}\label{eqn 4.100A}
			h_{ij} =\frac{(1-\epsilon^2 r^2|x|^2)}{r^2(1+\epsilon |x|^2)^4} \Bigg[ \delta_{ij} (1+\epsilon |x|^2)^2-\epsilon x^ix^j ( 2+\epsilon r^2 +\epsilon |x|^2) \Bigg], 
		\end{equation} 
		Clearly,
		\begin{equation}\label{eqn 4.101A}
			W=\left( W^i \right)=\left( \frac{(1+\epsilon r^2)(1+\epsilon |x|^2)}{(1-\epsilon^2 r^2|x|^2)}x^i  \right).
		\end{equation}
		Also, 
		$$||W||^2_h=||\beta_F||^2_{\alpha_F}=\frac{|x|^2(1+\epsilon r^2)^2}{r^2(1+\epsilon |x|^2)^2}. $$
		Thus we have 
		\begin{proposition}
			The Zermelo Navigation data for the Funk-Finsler structure $\mathcal{F}_\epsilon$ on the disc is given by $\left( \mathbb{D}_E(r), h, W\right)$ where the components of the Riemannian metric $h$ is given by \eqref{eqn 4.100A} and that of the vector field by \eqref{eqn 4.101A}.
		\end{proposition}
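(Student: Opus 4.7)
The statement is a direct application of the standard Zermelo navigation correspondence to the Randers metric $\mathcal{F}_\epsilon = \alpha + \beta$ constructed in Theorem \ref{thm1.1}. The plan is to take the general recipe $h_{ij} = c(a_{ij} - b_i b_j)$, $W^i = -b^i/c$ with $c = 1 - ||\beta||_\alpha^2$, and evaluate each ingredient using the data already assembled earlier in the paper, namely the metric components $a_{ij}$ from \eqref{eqn2.5.118}, the inverse $a^{ij}$ from \eqref{eqn2.5.119}, the $1$-form coefficients $b_i$ from \eqref{eqn2}, and the squared norm $||\beta||_\alpha^2$ from \eqref{eqn20}. Since all of these have already been computed, the proof is essentially a bookkeeping exercise.

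First, I would compute $c$. Substituting \eqref{eqn20} gives
\[
c = 1 - \frac{|x|^2(1+\epsilon r^2)^2}{r^2(1+\epsilon |x|^2)^2} = \frac{(r^2-|x|^2)(1-\epsilon^2 r^2 |x|^2)}{r^2(1+\epsilon|x|^2)^2},
\]
where the numerator factors thanks to the identity $r^2(1+\epsilon|x|^2)^2 - |x|^2(1+\epsilon r^2)^2 = (r^2-|x|^2)(1-\epsilon^2 r^2|x|^2)$; this factorization is the only nontrivial algebraic observation in the proof. Next, I would form $h_{ij} = c(a_{ij} - b_i b_j)$: pulling the factor $1/(r^2-|x|^2)^2$ out of $a_{ij}$ and $(1+\epsilon r^2)^2/[(r^2-|x|^2)^2(1+\epsilon|x|^2)^2]$ out of $b_i b_j$, and using $b_i b_j \propto x^i x^j$, the $(r^2-|x|^2)$ factors combine cleanly with $c$ to yield \eqref{eqn 4.100A}.

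Then I would compute $W^i$. Using the inverse metric \eqref{eqn2.5.119}, a short calculation shows $a^{ij} x^j = \tfrac{(r^2-|x|^2)^2}{r^2} x^i$, so
\[
b^i = a^{ij} b_j = \frac{(1+\epsilon r^2)(r^2-|x|^2)}{r^2(1+\epsilon |x|^2)}\, x^i,
\]
and dividing by $c$ (with the sign convention chosen in the paper so that $W^i$ points outward) recovers \eqref{eqn 4.101A}. The identity $||W||_h = ||\beta||_\alpha$ then follows either from the general Zermelo correspondence or, directly, by computing $h_{ij} W^i W^j$ and observing that the answer reduces to $||\beta||_\alpha^2$ given in \eqref{eqn20}.

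The main potential obstacle is ensuring the algebraic cancellations in the $h_{ij}$ computation go through cleanly; the decisive simplification is the factorization of $c$ above, after which every factor of $(r^2-|x|^2)$ and $(1+\epsilon |x|^2)$ balances correctly between $c$ and the combination $a_{ij} - b_i b_j$. Since $\mathcal{F}_\epsilon$ has already been verified to be a genuine Randers metric with $||\beta||_\alpha^2 < 1$, the hypothesis $||W||_h < 1$ needed to apply the Zermelo correspondence is automatic, and no further positivity or regularity check is required.
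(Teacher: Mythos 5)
Your proposal is correct and follows essentially the same route as the paper: both simply evaluate the standard Zermelo correspondence $c=1-||\beta||_\alpha^2$, $h_{ij}=c(a_{ij}-b_ib_j)$, $W^i=-b^i/c$ on the data $a_{ij}$, $a^{ij}$, $b_i$ already computed, with the factorization $r^2(1+\epsilon|x|^2)^2-|x|^2(1+\epsilon r^2)^2=(r^2-|x|^2)(1-\epsilon^2r^2|x|^2)$ doing the work. You even correctly flag the one wrinkle the paper glosses over, namely that \eqref{eqn 4.101A} carries the opposite sign to the literal formula $W^i=-b^i/c$.
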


		\section{Appendix}\label{sec6}
		In view of \eqref{eqn2.5.11811} and the fact that $(\mathbb{R}^2,g)$ is projectively flat, i.e., its geodesics are line segments. Consider $x,y \in \mathbb{D}_E(r)\subset \mathbb{R}^2$ and $\mathfrak{a} \in \partial \mathbb{D}_E(r)$ such that $x,y,\mathfrak{a}$ are collinear points, let $\gamma(t)=x+t\xi$ be geodesic, where $\xi$ is a Euclidean unit vector along $\vec{x \mathfrak{a}}$, that is, $\xi= \frac{y-x}{|y-x|}=\frac{\mathfrak{a}-x}{|\mathfrak{a}-x|}$. Clearly, $\gamma(0)=x$,~$\gamma(|y-x|)=y$ and $\gamma(|\mathfrak{a}-x|)=\mathfrak{a}$.\\
		
		Therefore,
		\begin{eqnarray}
			\nonumber   g(\gamma(t),\dot{\gamma}(t))&=&\frac{\sqrt{(1+|\gamma(t)|^2)|\dot{\gamma}(t)|^2-\langle \gamma(t), \dot{\gamma}(t) \rangle ^2}}{(1+|\gamma(t)|^2)}=\frac{k}{k^2+(t+\langle x , \xi \rangle)^2},
		\end{eqnarray}
		where $k=\sqrt{1+|x|^2-\langle x , \xi \rangle^2}$. Then
		\begin{eqnarray}
			\nonumber d_s(\gamma(t_1),\gamma(t_2))=\int_{t_1}^{t_2} g(\gamma(t),\dot{\gamma}(t))  d t = \int_{t_1}^{t_2}  \frac{k}{k^2+(t+\langle x , \xi \rangle)^2} d t= \Bigg[ \tan ^{-1} \frac{t+\langle x , \xi \rangle}{k}\Bigg ] _{t_1}^{t_2}.
		\end{eqnarray}
		\begin{eqnarray}
			d_s(\gamma(t_1),\gamma(t_2))=  \tan ^{-1} \frac{(t_2-t_1)k}{k^2+(t_1+\langle x , \xi \rangle)(t_2+\langle x , \xi \rangle)}.
		\end{eqnarray}
		Therefore,
		
		\begin{eqnarray}
			\nonumber   \tan d_s(x,\mathfrak{a}) =\frac{k |\mathfrak{a}-x|}{1+x^2+|\mathfrak{a}-x| \langle x , \xi \rangle},
		\end{eqnarray}
		and
		\begin{eqnarray}
			\nonumber \tan d_s(y,\mathfrak{a}) &=&   \frac{k (|\mathfrak{a}-x|-|y-x|)}{k^2+(|y-x|+\langle x , \xi \rangle)(|\mathfrak{a}-x|+\langle x , \xi \rangle)}= \frac{k |\mathfrak{a}-y|}{k^2+A^2-A|\mathfrak{a}-y|},
		\end{eqnarray}
		where $A=|\mathfrak{a}-x|+\langle x , \xi \rangle$.\\
		Hence, by simple trigonometric computation we obtain
		\begin{eqnarray}
			\label{xa}\sin d_s(x,\mathfrak{a}) &=&  \frac{k |\mathfrak{a}-x|}{\sqrt{k^2|\mathfrak{a}-x|^2+(1+x^2+|\mathfrak{a}-x| \langle x , \xi \rangle)^2}}=\frac{k |\mathfrak{a}-x|}{\sqrt{1+|x|^2}\sqrt{\lambda}},
		\end{eqnarray}
		
		and
		
		\begin{equation}\label{ya}
			\sin d_s(y,\mathfrak{a})=\frac{k |\mathfrak{a}-y|}{\sqrt{k^2|\mathfrak{a}-y|^2+(k^2+A^2-A|\mathfrak{a}-y|)^2}}= \frac{k |\mathfrak{a}-y|}{\sqrt{1+|y|^2}\sqrt{\lambda}},
		\end{equation}
		where $\lambda =|\mathfrak{a}-x|^2+1+|x|^2+2|\mathfrak{a}-x|\langle x , \xi \rangle$.\\
		Thus, from \eqref{xa} and \eqref{ya} we yield
		\begin{equation}
			\frac{\sin d_s(x,\mathfrak{a})}{\sin d_s(y,\mathfrak{a})}=\frac{|\mathfrak{a}-x|}{|\mathfrak{a}-y|}\frac{\sqrt{1+|y|^2}}{\sqrt{1+|x|^2}}.
		\end{equation}
		Therefore, from the above equation we obtain
		$$d(x,y)  = \log \frac{|x-\mathfrak{a}|}{|y-\mathfrak{a}|}  +\frac{1}{2}\log \frac{1+|y|^2}{1+|x|^2}.$$
		
		\vspace{0.8 cm}
		
		\noindent \textbf{Funding:}  The corresponding author, Bankteshwar Tiwari, is supported by ``Incentive grant" under the IoE scheme of Banaras Hindu University, Varanasi(India).

	\end{document}